\newtheorem{theorem}{Theorem}[section]
\newtheorem{claim}[theorem]{Claim}
\newtheorem{corollary}[theorem]{Corollary}
\newtheorem{lemma}[theorem]{Lemma}
\def\QED{\ensuremath{{\square}}}
\def\markatright#1{\leavevmode\unskip\nobreak\quad\hspace*{\fill}{#1}}
\newenvironment{proof}
{\begin{trivlist}\item[\hskip\labelsep{\bf Proof.}]}
{\markatright{\QED}\end{trivlist}}
\def\R{\mathbb{R}}
\newtheorem{theorem*}{Theorem}
\def\R{\mathop{I\!\!R}\nolimits}
\begin{document}


\title{Extremal antipodal polygons and polytopes}

\author{O. Aichholzer\thanks{Institute for Software Technology, University of
Technology Graz, Austria {\tt oaich@ist.tugraz.at}. Partially supported by the ESF EUROCORES
programme EuroGIGA - ComPoSe, Austrian Science Fund (FWF): I~648-N18.} \and
L.E. Caraballo\thanks{Facultad de Matem\'aticas y Computaci\'on. Universidad de La Habana. {\tt luis.caraballo@iris.uh.cu}} \and
J. M. D\'{\i}az-B\'{a}\~{n}ez\thanks{Departamento de Matem\'{a}tica
Aplicada II, Universidad de Sevilla, Spain. Partially supported by
projects FEDER  P09-TIC-4840 and MEC MTM2009-08652. {\tt
\{dbanez\}@us.es.}} \and
R.~Fabila-Monroy\thanks{Departamento de
Matem\'aticas. Centro de Investigaci\'on y de Estudios Avanzados del Instituto Polit\'ecnico Nacional, Mexico City, Mexico. Partially supported by Conacyt of Mexico, grant 153984.
 {\tt ruyfabila@math.cinvestav.edu.mx}}\and
C. Ochoa\thanks{Facultad de Matem\'aticas y Computaci\'on. Universidad de La Habana. {\tt ochoa@matcom.uh.cu}}\and
P. Nigsch\thanks{University of Technology Graz, Austria {\tt paul.nigsch@student.tugraz.at}}
}

%
%
%
%
%

\date{\today}

\maketitle

\begin{abstract}
Let $S$ be a set of $2n$ points on a circle such that for each point $p \in S$ also its antipodal (mirrored with respect to the circle center) point $p'$ belongs to $S$. A polygon $P$ of size $n$ is called \emph{antipodal} if it consists of precisely one point of each antipodal pair $(p,p')$ of $S$.

We provide a complete characterization of antipodal polygons which maximize (minimize, respectively) the area among all antipodal polygons of $S$. Based on this characterization, a simple linear time algorithm is presented for computing extremal antipodal polygons. Moreover, for the generalization of antipodal polygons to higher dimensions we show that a similar characterization does not exist.
\end{abstract}

\noindent \textbf{Keywords:} Antipodal points; extremal area polygons; discrete and computational geometry.

\section{Introduction}

For a point $p=(x_1,x_2) \in \R^2$, let $p':=(-x_1,-x_2)$ be the antipodal point of $p$. Consider a set $S$ of points on a circle centered at the origin such that for each point $p \in S$ also its antipodal point $p'$ belongs to $S$. We choose one point from each antipodal pair of $S$ such that their convex hull is as large or as small (w.r.t. its area) as possible. Intuitively speaking, the largest polygon will have to contain the center of the circle, but the smallest one does not. In Figure~\ref{fig:thin_thick} an example of a thin (not containing the center) and a thick (containing the center) polygon is shown. An interesting question, which immediately suggests itself, is whether any thick polygon of $S$ has larger area than any thin polygon of $S$? In this paper, we will formalize the mentioned concepts of thin and thick polygons and answer this question for sets in the plane as well as for higher dimensions.

We start by introducing the problem formally in the plane. The generalization for higher dimensions is straightforward.
A set of $2n$ points on the unit circle centered at the origin
is called an \emph{antipodal point set} if for every point $p$ it also contains its antipodal point $p'$.
Let $S:=\{p_1,p_1',p_2,p_2',\dots,p_n,p_n'\}$ be
such a set. An \emph{antipodal} polygon on $S$
is a convex polygon having as vertices precisely one point
from each antipodal pair $(p_i,p_i')$ of $S$. A \emph{thin} antipodal polygon
is an antipodal polygon whose vertices all lie in a half-plane defined
by some line through the origin.  A \emph{thick} antipodal polygon is
an antipodal polygon such that at least $\left \lceil \frac{n-2}{2}
\right \rceil$ of its vertices lie in both open half-planes defined
by any given line through the origin. See Figure~\ref{fig:thin_thick}.
Note that a non-thin antipodal
polygon does not need to be thick, but a thick antipodal polygon can
never be thin. Moreover, a thin antipodal
polygon does not contain the center of the circle and a non-thin antipodal polygon always contains it.

\begin{figure}
\begin{center}
  \includegraphics[width=0.5\textwidth]{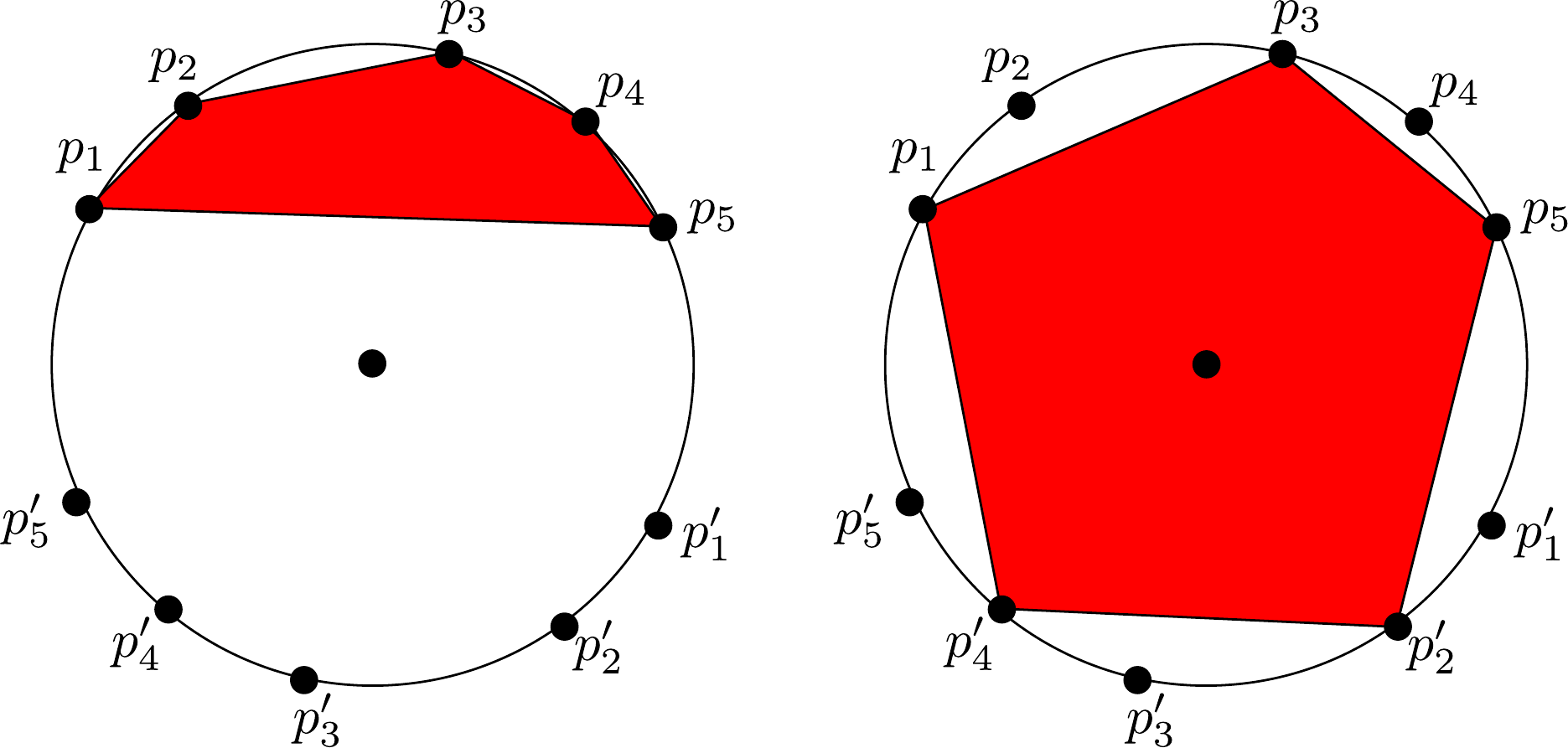}
  \caption{A thin (left) and a thick (right) antipodal polygon.}\label{fig:thin_thick}
\end{center}
\end{figure}

In this paper we investigate the following questions:

\begin{itemize}
\item Does a thick antipodal polygon always have larger area than a thin antipodal polygon?
\item How efficiently can one compute an antipodal polygon with minimal (maximal) area?
\item What can be said about antipodal polygons in higher dimensions?
\end{itemize}

\subsection{Related work}

The questions studied here are related to several other geometric problems, some of which we mention below.

\emph {Extremal problems:} Plane geometry is rich of extremal problems, often dating back till the ancient Greeks. During the centuries many of these problems have been solved by geometrical reasoning. Specifically, extremal problems on convex polygons have attracted the attention of both fields, geometry and optimization. In computational geometry, efficient algorithms have been proposed for computing extremal polygons w.r.t. several different properties~\cite{boyce}. In operations research, global optimization techniques have been extensively studied
 to find convex polygons maximizing a given parameter~\cite{hansen}. A geometric extremal problem similar to the one studied in this paper was solved by Fejes T\'{o}th ~\cite{fejes} almost fifty years ago. He showed that the sum of pairwise distances determined by $n$ points contained in a circle is maximized when the points are the vertices of a regular $n$-gon inscribed in the circle. Recently, the discrete version of this problem has been reviewed in~\cite{toussaint} and problems considering maximal area instead of the sum of inter-point distances have been solved in~\cite{rappaport}.

\emph{Stabbing problems:} The problem of stabbing a set of objects by a polygon (transversal problems in the mathematics literature) has been widely studied. For example, in computational geometry, Arkin et al. \cite{arkin}  considered the following problem: a set $S$ of segments is \emph{stabbable} if there exists a convex polygon whose boundary $C$ intersects every segment in $S$; the
closed convex chain $C$ is then called a (convex) transversal or \emph{stabber} of $S$. Arkin et al. proved that deciding whether $S$ is stabbable is an NP-hard problem.
In a recent paper \cite{pilz}, the problem of stabbing the set $S$ of line segments by a simple polygon but with a different criterion has been considered. A segment $s$ is stabbed by a simple polygon $P$ if at least one of the two endpoints of $s$ is contained in $P$. Then the problem is: Find a
simple polygon $P$ that stabs $S$ and has minimum(maximum) area among those that stab $S$. In \cite{pilz}, it is shown that if $S$ is a set of $n$ pairwise disjoint segments, the problem of computing the minimum and maximum area (perimeter) polygon stabbing $S$ can be solved in polynomial time. However, for general (crossing) segments the problem is APX-hard. Notice  that our problem is  a constrained version of the problem studied  in \cite{pilz} in which each segment joins two antipodal points on a circle. As we will show later, our antipodal version (in which all segments intersect at the origin) can be computed in linear time.

 \begin{figure}
\begin{center}
  \includegraphics[width=0.5\textwidth]{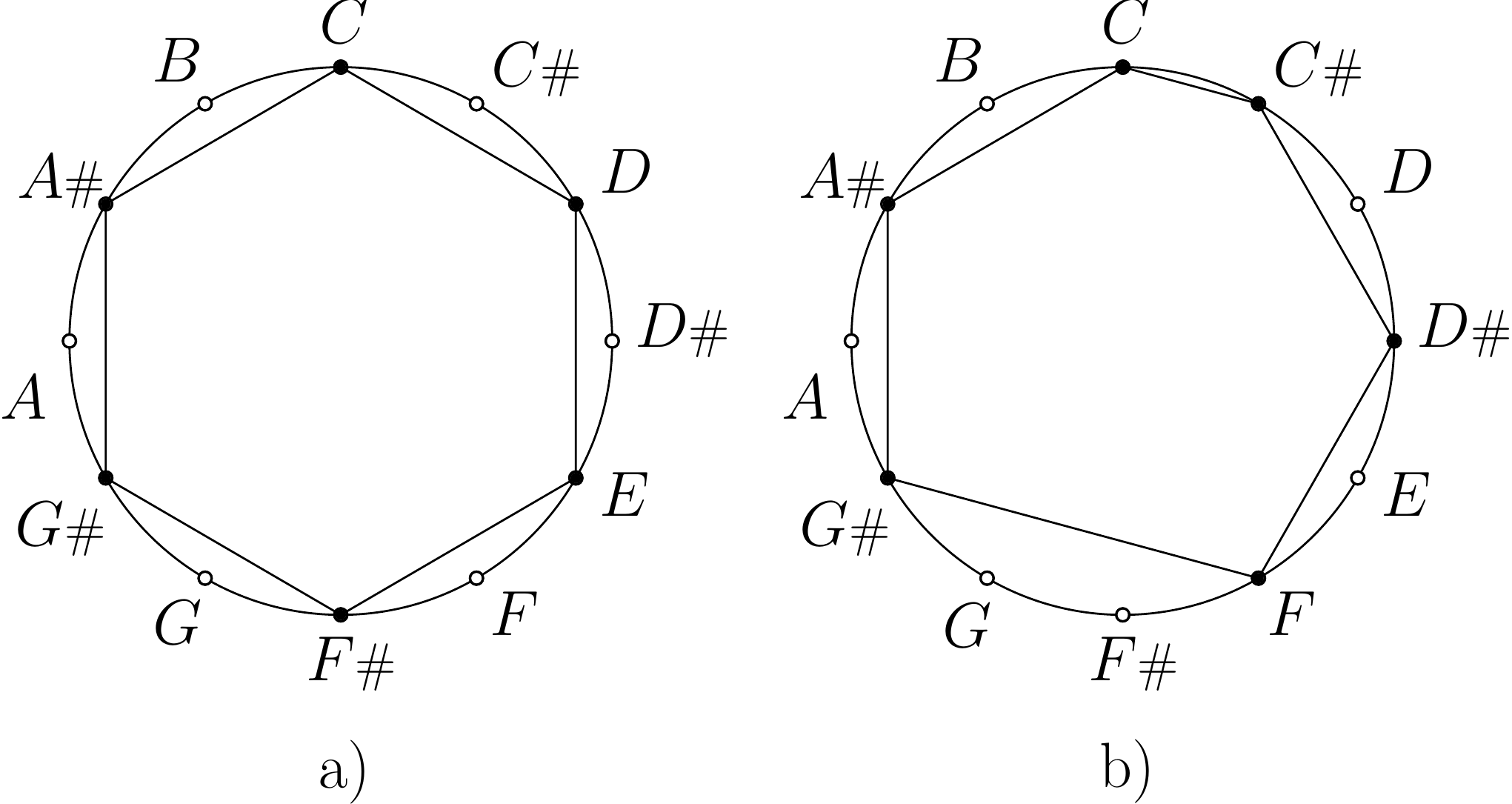}
  \caption{The subsets in a) and b) represent maximally even scales with and without tritones, respectively.}\label{fig:tritone}
\end{center}
\end{figure}

\emph{Music Theory:} There exists a surprisingly high number of applications of mathematics to music theory. Questions about variation, similarity, enumeration, and classification of musical structures have long intrigued both musicians and mathematicians. In some cases, these problems inspired mathematical discoveries. The research in music theory has illuminated problems that are appealing, nontrivial, and, in some cases, connected to deep mathematical questions. See for example \cite{behrends,benson} for introductions to the interplay between mathematics and music.

In our case, an antipodal polygon is related with the \emph{tritone} concept in music theory.  Typically, the notes of a scale are represented by a polygon in a clock diagram. In a chromatic scale, each whole tone can be further divided into two semitones. Thus, we can think in a clock diagram with twelve
points representing the twelve equally spaced pitches that represent the chromatic universe (using
an equal tempered tuning). The pitch class diagram is illustrated in Figure~\ref{fig:tritone} . A tritone is traditionally defined as a musical interval composed of three whole tones. Thus, it is any interval spanning six semitones.
In Figure~\ref{fig:tritone} a), the polygon represents a scale containing the tritones $CF\#, DG\#, EA\#$. The tritone is defined as a restless interval or dissonance in Western music from the early Middle Ages. This interval was frequently avoided in medieval ecclesiastical singing because of its dissonant quality. The name \emph{diabolus in musica} (the Devil in music) has been applied to the interval from at least the early 18th century \cite{grove}.

In this context, an antipodal polygon corresponds to a subset of notes or harmonic scale avoiding the tritone and, according to \cite{rappaport, toussaint},
 a maximal antipodal polygon represents a maximally even set that avoids the tritone.

\subsection{Our results}
In this paper we show that:

\begin{claim}\label{claim:thin}
  For a given antipodal point set $S \in \R^2$ every thin antipodal
  polygon on $S$ has less area than any non-thin antipodal polygon on
  $S$.
\end{claim}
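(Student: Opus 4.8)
The plan is to compare any thin polygon against any non-thin polygon by exploiting the structure forced by the antipodal constraint. The key observation I would start from is that the area of an antipodal polygon can be computed as a sum of signed triangle areas from the origin to consecutive edges. For a thin polygon all its vertices lie in a common half-plane through the origin, so the origin lies \emph{outside} the polygon; consequently, when we express the area as a sum over the triangles spanned by the origin and the boundary edges, some of these contributions are negative (the origin is not enclosed). For a non-thin polygon the origin is contained in the interior, so every such triangle contributes positively. This sign distinction is the conceptual engine of the proof.

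\medskip

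First I would fix a thin polygon $T$ and a non-thin polygon $Q$ on the same antipodal set $S$, and reduce the comparison to a common framework. Since both polygons pick exactly one vertex from each antipodal pair $(p_i,p_i')$, I would pair up the vertices: for each index $i$, $T$ uses one of $\{p_i,p_i'\}$ and $Q$ uses one of $\{p_i,p_i'\}$. The natural strategy is to bound the area of \emph{any} thin polygon from above by the area of a canonical extreme object (for instance, the thin polygon whose vertices are a maximal consecutive arc of the $p_i$'s), and symmetrically bound the area of any non-thin polygon from below. If I can show that even the largest thin polygon has area strictly less than the smallest non-thin polygon, the claim follows. Concretely, the largest thin polygon inscribes in a half-disk while every non-thin polygon must ``wrap around'' the center.

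\medskip

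The cleanest route, which I would pursue as the main line, is an exchange/reflection argument. Given a non-thin polygon $Q$, the fact that it is not thin means its vertices do not all lie in a single half-plane through the origin; equivalently, the center lies in its convex hull. I would then show that reflecting any single vertex $p_i$ to its antipode $p_i'$ changes the area in a controlled way, and that one can transform a thin polygon into a non-thin one through a sequence of such reflections, each of which strictly increases the area once the origin becomes enclosed. The quantitative heart is a lemma stating that replacing a vertex by its antipode increases area precisely when doing so moves the swept region to the far side of the center. Summing the incremental area changes, each positive at the critical transitions, yields the strict inequality.

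\medskip

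The hard part will be handling the combinatorial freedom in how the two polygons choose their representatives: a thin and a non-thin polygon need not differ in a single ``nice'' way, so controlling the cumulative effect of many simultaneous antipodal swaps is delicate. I expect the cleanest formulation to isolate a single line $\ell$ through the origin witnessing thinness of $T$, write the area of any antipodal polygon as an integral/sum of support-function contributions measured from $\ell$, and argue that the non-thin polygon necessarily gains the contribution from the half-plane on the far side of $\ell$ that the thin polygon entirely forgoes. Establishing that this far-side gain strictly dominates any loss on the near side — uniformly over all thin and all non-thin choices — is the crux, and I anticipate it requires using that the points lie on a circle so that antipodal pairs are diametrically opposite and each swap reflects a vertex exactly through the center.
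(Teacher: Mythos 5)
Your proposal never actually closes the argument: each of the three routes you sketch (bounding the largest thin polygon against the smallest non-thin one, the flip/reflection sequence, and the support-function decomposition) is announced rather than carried out, and you yourself label the decisive step an unresolved ``crux.'' Worse, the key lemma your main line rests on --- that replacing a vertex by its antipode strictly increases the area ``once the origin becomes enclosed'' --- is false. Take $n=6$ with $p_1,p_2$ arbitrarily close to $(1,0)$ and $p_3,\dots,p_6$ arbitrarily close to $(0,1)$, with $p_3$ placed slightly clockwise of $(0,1)$ so that its $x$-coordinate is positive (this is exactly the configuration the paper uses in its Theorem 3.5). The antipodal polygon $Q$ with vertices $p_1,p_2',p_3,p_4',p_5,p_6'$ encloses the origin and has area close to $2$; flipping $p_1$ to $p_1'$ yields the polygon $p_1',p_2',p_3,p_4',p_5,p_6'$, which still encloses the origin but has area close to $1$. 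So a flip between two origin-enclosing polygons can strictly decrease the area, and no ordering trick can make an arbitrary flip path monotone.

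There is also a quantifier problem independent of that counterexample. The claim compares \emph{every} thin polygon with \emph{every} non-thin polygon, so an area-increasing flip path from a thin polygon $T$ to \emph{some} non-thin polygon only shows that some non-thin polygon beats $T$. To conclude, you would need a monotone path from $T$ to the arbitrarily prescribed non-thin $Q$, i.e.\ monotonicity of flips within the class of origin-enclosing polygons --- precisely what fails above. This is why the paper reserves flip arguments for Section 3 (the thick-polygon statement), and even there admits only two carefully restricted flip operations (Lemmas 3.1 and 3.2) that are proved area-increasing; indeed, for $n$ even the universal comparison among non-thin polygons genuinely fails (Theorem 3.5). The paper's proof of the present claim is structurally different: an ``ear'' lemma (Lemma 2.1: the triangle formed by $p$ and its two neighbors in $S$ is the strictly smallest triangle at $p$ with one vertex on each side of the line $pp'$), explicit base cases $n=3$ and $n=4$, and then induction on $n$, removing from a triangulation of the given non-thin polygon an ear that avoids the origin so that the remaining polygon is still non-thin on the smaller antipodal set. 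If you want to salvage your plan, your second strategy (show $\max$ thin $<$ $\min$ non-thin) is the most promising, but it still needs an inductive or case-based engine of this kind; the signed-area and support-function observations by themselves compare triangles attached to different polygons and give no inequality.
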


In addition we show that the 2-dimensional case is special in the
sense that the above result can not be generalized to higher
dimensions.

The analogue result holds for thick antipodal
polygons when $n$ is odd but surprisingly turns out to be wrong when $n$ is even; for $n$
even we provide an example of an antipodal non-thick polygon having
larger area than a thick antipodal polygon. However we are able to
show that:

\begin{claim}\label{claim:thick}
  For a given antipodal point set $S \in \R^2$ and every non-thick
  antipodal polygon on $S$, there exists a thick antipodal polygon on
  $S$ with larger area.
\end{claim}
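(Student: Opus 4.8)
The plan is to deduce Claim~\ref{claim:thick} from the following strengthening, which I will call the \emph{improvement lemma}: every antipodal polygon that is not thick admits a single \emph{flip} -- the operation replacing one chosen vertex $p_i$ by its antipode $p_i'$ (or vice versa) -- producing another antipodal polygon of strictly larger area. Granting this, Claim~\ref{claim:thick} follows by iteration: starting from a non-thick polygon $P=P_0$, as long as the current polygon $P_t$ is not thick I apply an area-increasing flip to obtain $P_{t+1}$. Since there are only finitely many antipodal polygons on $S$ and the area strictly increases at each step, the process cannot cycle and must terminate; by the improvement lemma it can only terminate at a thick polygon $P_T$, and $\mathrm{Area}(P_T)>\mathrm{Area}(P_0)$ because at least one flip was performed. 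Thus $P_T$ is the required thick polygon of larger area. (Equivalently, one may observe that any global area-maximizer is thick by the improvement lemma and that it must strictly dominate every non-thick polygon.)

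To set up the improvement lemma I would work with the area formula for polygons inscribed in the unit circle. Writing the chosen vertices in counterclockwise order as angles $\beta_1<\beta_2<\dots<\beta_n$ and letting $\Delta_k=\beta_{k+1}-\beta_k$ denote the consecutive angular gaps (cyclically, so $\sum_k\Delta_k=2\pi$), the area equals $\tfrac12\sum_{k=1}^n\sin\Delta_k$; this signed form automatically accounts for whether the center is contained. The effect of a flip is then local and explicit: removing a vertex whose two adjacent gaps are $g_1,g_2$ costs $\tfrac12\bigl(\sin g_1+\sin g_2-\sin(g_1+g_2)\bigr)$, while reinserting its antipode into a gap of size $G$, splitting it into $h_1,h_2$, gains $\tfrac12\bigl(\sin h_1+\sin h_2-\sin G\bigr)$; both quantities are nonnegative. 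The flip increases the area precisely when the gain exceeds the loss.

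The geometric heart of the argument is that a non-thick polygon is lopsided: by definition there is a line $\ell$ through the origin one of whose open sides contains at most $\lceil\frac{n-2}{2}\rceil-1$ vertices, so the other side is heavily overloaded. Concentrating the vertices in one halfplane forces a correspondingly large empty gap in the opposite direction; more precisely, the largest gap $G$ lies essentially antipodal to the densest cluster. I would then flip the vertex $v$ lying closest to the direction antipodal to this large gap: its removal loss is small because $v$ sits among tightly packed neighbours, whereas its antipode falls near the middle of the gap $G$, yielding a gain of order $2\sin(G/2)\bigl(1-\cos(G/2)\bigr)$, which is large. Showing that the gain exceeds the loss for this (or some) choice is what produces the improving flip.

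The main obstacle is making this balance quantitatively tight exactly at the threshold $\lceil\frac{n-2}{2}\rceil$: I expect that a crude ``middle of the largest gap'' estimate comfortably handles very lopsided configurations, but the delicate regime is when one side falls just short of the thick threshold, where gain and loss become comparable and the parity of $n$ enters (indeed the excerpt already flags that for even $n$ the naive analogue of Claim~\ref{claim:thin} fails). Here I anticipate needing a careful case analysis -- possibly choosing among several candidate vertices to flip, or comparing a flip against its antipodal counterpart using the area-preserving reflection symmetry $P\mapsto -P$ -- together with attention to degenerate positions (vertices on $\ell$, or ties among the largest gaps). Pinning down that an improving flip exists for \emph{every} non-thick polygon, and not merely for the very unbalanced ones, is the step I expect to require the most work.
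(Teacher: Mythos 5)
Your reduction is fine as far as it goes: \emph{if} every non-thick antipodal polygon admitted a single area-increasing flip, then iterating flips (or looking at the global maximizer) would indeed prove Claim~\ref{claim:thick}. The problem is that this ``improvement lemma'' --- the step you explicitly leave open --- is not just unproven but false. Take $n=9$ and let $S$ be the $18$ equally spaced points, $p_i$ at angle $20(i-1)^\circ$ and $p_i'$ at angle $20(i-1)^\circ+180^\circ$, and let $P$ have vertex set $\{p_1,p_3,p_4,p_6,p_7,p_9,p_2',p_5',p_8'\}$, i.e.\ angles $0,40,60,100,120,160,200,260,320$ (degrees). $P$ is non-thick: the open half-plane bounded by the line through the origin in direction $170^\circ$ that contains the arc $(170^\circ,350^\circ)$ contains exactly the points $p_1',\dots,p_9'$ of $S$, hence only the three vertices $p_2',p_5',p_8'$ of $P$, and $3<\lceil (9-2)/2\rceil=4$. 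The angular gaps of $P$ are $(40,20,40,20,40,40,60,60,40)$ in degrees, and checking all nine possible flips with your own formula: flipping any of $p_3,p_4,p_6,p_7$ loses $\tfrac12(\sin 40^\circ+\sin 20^\circ-\sin 60^\circ)$ and gains exactly the same amount (the antipode lands in a $60^\circ$ gap, splitting it into $20^\circ+40^\circ$), so the area change is exactly $0$; flipping $p_1$ or $p_9$ loses $\tfrac12(2\sin 40^\circ-\sin 80^\circ)\approx 0.15$ but gains only $\tfrac12(2\sin 20^\circ-\sin 40^\circ)\approx 0.02$; and flipping $p_2'$, $p_5'$ or $p_8'$ loses at least $\tfrac12(\sin 40^\circ+\sin 60^\circ-\sin 100^\circ)\approx 0.26$ against the same gain $\approx 0.02$. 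So no single flip strictly increases the area, your iteration halts at $P$ while $P$ is still non-thick, and the fallback argument (``any global maximizer is thick'') loses its justification for the same reason. Note also that your specific recipe (flip the vertex opposite the largest gap) selects $p_3$ or $p_4$ here and yields change $0$, not an improvement; and since the lemma must hold for \emph{every} antipodal set, the ties in this symmetric configuration cannot be dismissed as degenerate.

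This is exactly the difficulty the paper's proof is built to avoid, and the example shows its extra machinery is genuinely necessary rather than a convenience. The paper uses two operations: Lemma~\ref{lem:3cons} is a single flip, but it applies only when three consecutive points of $S$ are vertices (not the case above), while Lemma~\ref{lem:2cons} flips an entire alternating run of points \emph{simultaneously}. In the example, flipping $p_4,p_5,p_6$ together turns $P$ into the alternating polygon $p_1,p_3,p_5,p_7,p_9,p_2',p_4',p_6',p_8'$, whose area is $\tfrac92\sin 40^\circ\approx 2.89$, strictly larger than $\operatorname{Area}(P)\approx 2.81$; the gain is realized only by the compound move, the intermediate single flips being area-neutral or area-decreasing. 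So the claim itself survives in this example, but not via your route. To salvage a flip-by-flip scheme you would have to allow area-preserving flips and prove that the resulting plateaus can always be traversed to reach a strictly improving position without cycling --- a substantially different (and unaddressed) argument; the honest conclusion is that the near-threshold regime you flagged as ``the step requiring the most work'' is where the proposed lemma actually fails.
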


Note that above claims imply that an antipodal
polygon with minimum (resp. maximum) area is thin (resp. thick).

\section{Thin antipodal polygons}\label{sec:thin}

Assume that the clockwise circularly order of $S$ around the origin is
$p_1,p_2,\dots,p_n,$ $p_1',p_2',\dots,p_n'$.
For every point $q$ in $S$, let
$S_q$ be the thin antipodal polygon that contains $q$ as a vertex and
all $n-1$ next consecutive points clockwise from $q$.  Note that all
thin antipodal polygons are of this form and that $S_q$ and $S_q'$
are congruent.

First, we prove a lemma regarding the triangles containing
a given point of $S$.

\begin{lemma}\label{lem:ear}
  For a point $p \in S$ let $\ell$ be the line containing $p$ and $p'$.
  Let $\tau$ be the triangle determined by $p$,
  and its two neighbors in $S$. Among all triangles that have as
  vertices $p$ and one point of $S$ in each of the two half-planes
  defined by $\ell$, $\tau$ has strictly the smallest area.
\end{lemma}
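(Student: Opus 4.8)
The plan is to normalize coordinates by rotating so that the line $\ell$ through $p$ and $p'$ becomes the $x$-axis; then $p=(1,0)$, $p'=(-1,0)$, and every other point of $S$ sits on the unit circle at some angle. The two open half-planes determined by $\ell$ are the upper and lower half-planes, and the two neighbors of $p$ defining $\tau$ are precisely the nearest point above and the nearest point below $\ell$. First I would record the crucial structural observation: if the $n-1$ points of $S$ in the open upper half-plane are written as $a$ at angles $\alpha\in A\subset(0,\pi)$, then the points in the open lower half-plane are exactly the antipodes of these, so a lower point is the antipode of some upper point at angle $\beta$ and hence sits at angle $\beta-\pi$. This antipodal coupling between the two half-planes is what makes the lemma true, and I expect it to be the heart of the argument.

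Next I would compute the area of the triangle with vertices $p$, an upper point $a$ at angle $\alpha$, and the lower point that is the antipode of an upper point at angle $\beta$. A direct cross-product computation followed by product-to-sum simplifications gives the compact expression
\[
\mathrm{area} = \tfrac12\bigl(\sin\alpha+\sin\beta+\sin(\alpha-\beta)\bigr) =: \tfrac12\,G(\alpha,\beta).
\]
In this parametrization $\tau$ corresponds to taking $a$ to be the nearest upper neighbor of $p$, i.e. $\alpha=\alpha_{\min}:=\min A$, and the lower vertex to be the nearest lower neighbor, which is the antipode of the upper point farthest from $p$, i.e. $\beta=\alpha_{\max}:=\max A$. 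So the lemma reduces to showing that $G$ attains its strict minimum over $A\times A$ at the corner $(\alpha_{\min},\alpha_{\max})$.

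I would then exploit that $G$ is unimodal in each variable separately. Differentiating gives $\partial_\alpha G = 2\cos(\alpha-\tfrac\beta2)\cos\tfrac\beta2$ and $\partial_\beta G = -2\sin\tfrac\alpha2\,\sin(\beta-\tfrac\alpha2)$; since $\alpha,\beta\in(0,\pi)$ the factors $\cos\tfrac\beta2$ and $\sin\tfrac\alpha2$ are strictly positive, so in each variable $G$ first increases and then decreases. Consequently the minimum of $G$ over the discrete box $A\times A$ is attained at one of the four corners $(\alpha_{\min},\alpha_{\min})$, $(\alpha_{\min},\alpha_{\max})$, $(\alpha_{\max},\alpha_{\min})$, $(\alpha_{\max},\alpha_{\max})$. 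It then remains to compare these four values: writing $a=\alpha_{\min}$ and $c=\alpha_{\max}$ with $a<c$, one has $G(a,a)=2\sin a$ and $G(c,c)=2\sin c$, while $\sin(a-c)<0<\sin(c-a)$ immediately gives $G(a,c)<G(c,a)$; two further elementary product-to-sum estimates yield $G(a,c)<2\sin a$ and $G(a,c)<2\sin c$. Hence $(\alpha_{\min},\alpha_{\max})$ is the strict minimizer, which is exactly $\tau$.

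The main obstacle, and the reason a one-line ``smaller angles give smaller area'' argument fails, is that the triangle area is \emph{not} monotone in the angular positions of the two free vertices: $G$ has interior maxima, and without the antipodal coupling a far--far pair of vertices can in fact produce a \emph{smaller} triangle than the nearest-neighbor pair. The proof must therefore genuinely use the constraint that the two half-planes are populated by mutually antipodal points (which collapses the two angular parameters onto the single set $A$ and produces the clean formula for $G$), combined with the per-variable unimodality that confines the optimum to the four corners. Making these two ingredients interact correctly, and keeping every corner comparison strict, is where the care is required.
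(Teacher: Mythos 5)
Your proof is correct, but it takes a genuinely different route from the paper's. The paper argues synthetically and in one stroke: for an arbitrary competing triangle $\tau'$, the side $b'$ opposite $p$ is at least as long as the corresponding side $b$ of $\tau$ (this rests on exactly the antipodal coupling you identified: since the two open half-planes bounded by $\ell$ carry antipodal copies of the same point set, every chord joining them has length $2\cos\tfrac{|\alpha-\beta|}{2}$, which is minimized by $b$), while the height of $\tau'$ from $p$ is strictly larger than that of $\tau$, because $b$ joins the two neighbors of $p$ and therefore separates $p$ from all remaining points of $S$; multiplying base by height gives the strict area inequality. You replace this short geometric comparison with an explicit optimization: the closed form $G(\alpha,\beta)=\sin\alpha+\sin\beta+\sin(\alpha-\beta)$, the factored partials $\partial_\alpha G = 2\cos(\alpha-\tfrac{\beta}{2})\cos\tfrac{\beta}{2}$ and $\partial_\beta G = -2\sin\tfrac{\alpha}{2}\sin(\beta-\tfrac{\alpha}{2})$ giving strict per-variable unimodality on $(0,\pi)$, reduction of the minimum over $A\times A$ to the four corners, and three strict corner comparisons --- all of which check out: $G(a,c)<G(c,a)$ is immediate, $G(a,c)<2\sin a$ reduces to $\cos(c-\tfrac{a}{2})<\cos\tfrac{a}{2}$, and $G(a,c)<2\sin c$ reduces to $\sin(a-\tfrac{c}{2})<\sin\tfrac{c}{2}$, both true for $0<a<c<\pi$. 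What the paper's argument buys is brevity and coordinate-freeness; what yours buys is that every step is mechanically verifiable, the role of the antipodal hypothesis is cleanly isolated (it collapses both half-planes onto the single angle set $A$), and you make explicit a subtlety the paper's terse base-and-height argument passes over in silence: the area is \emph{not} monotone in the vertex angles ($G$ has interior maxima), so no naive ``nearer vertices give a smaller triangle'' shortcut can succeed.
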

\begin{proof}
  Let $\tau'$ be a triangle with vertices in $S$, containing $p$ as a
  vertex and with a vertex in each of the two half-planes defined by
  $\ell$. Assume that $\tau'$ is different from~$\tau$.  Let $b$ be
  the side opposite to $p$ in $\tau$ and $b'$ be the side opposite to
  $p$ in $\tau'$.  Note that $b'$ is at least as large as $b$,
  because $S$ is an antipodal point set and $\ell$ contains
  the origin. The height of $\tau'$ with
  respect to $p$ is greater than the height of $\tau$ with respect to
  $p$, as otherwise $b'$ would have to intersect $b$, which is not
  possible by construction. Thus the area of $\tau'$ is larger than the
  area of $\tau$.
\end{proof}

%
%
%

We split the proof of Claim~\ref{claim:thin} into the three cases
$n=3$, $n=4$, and $n \geq 5$.

\begin{lemma}\label{lem:n=3}
  For $n=3$, every thin
  antipodal polygon on $S$ has an area strictly less than that of any
  non-thin antipodal polygon on $S$.
\end{lemma}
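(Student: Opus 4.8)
The plan is to reduce everything to Lemma~\ref{lem:ear}. The first step is to enumerate the antipodal triangles for $n=3$. There are $2^3=8$ of them; six are the consecutive triples $S_{p_1},S_{p_2},S_{p_3},S_{p_1'},S_{p_2'},S_{p_3'}$ and are thin, so exactly two are non-thin, namely $T=\{p_1,p_2',p_3\}$ and $T'=\{p_1',p_2,p_3'\}$ (the two origin-containing selections). Since the reflection through the origin maps each $p_i$ to $p_i'$, it sends $T$ to $T'$, so $T$ and $T'$ are congruent and in particular have equal area; hence it suffices to show that every thin triangle has area strictly less than the area of $T$.

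Next I would fix angular positions, writing $p_i$ at angle $\theta_i$ with $0<\theta_1<\theta_2<\theta_3<\pi$ and $p_i'$ at angle $\theta_i+\pi$. For a vertex $v$ of $T$ let $\ell$ be the line through $v$ and $v'$. Because $T$ is non-thin it contains the origin, so at each vertex $v$ the line $\ell$ strictly separates the other two vertices of $T$; this I would verify directly from the angular positions (for instance at $v=p_1$ the points $p_2'$ and $p_3$ fall in opposite open half-planes of $\ell$). Thus $T$ is one of the triangles to which Lemma~\ref{lem:ear} applies at the vertex $v$, the minimal such triangle being the ear $\tau_v$ formed by $v$ and its two neighbors in $S$.

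The core of the argument is then to identify these ears from the cyclic order $p_1,p_2,p_3,p_1',p_2',p_3'$. Running over the three vertices $p_1,p_2',p_3$ of $T$, the neighbors of $v$ give the ears $\{p_3',p_1,p_2\}=S_{p_3'}$, $\{p_1',p_2',p_3'\}=S_{p_1'}$ and $\{p_2,p_3,p_1'\}=S_{p_2}$; doing the same for the three vertices $p_1',p_2,p_3'$ of $T'$ yields the remaining three thin triangles $S_{p_3}$, $S_{p_1}$ and $S_{p_2'}$, so the six ears are exactly the six thin triangles with no repetition. Since each ear is thin while $T,T'$ are non-thin, each ear is distinct from the triangle it comes from, and Lemma~\ref{lem:ear} gives the strict inequality: each of these six thin triangles has area strictly smaller than that of $T$ or of $T'$. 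Using $\mathrm{area}(T)=\mathrm{area}(T')$, all six thin triangles have area strictly less than $T$, which is exactly the claim.

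The main obstacle I anticipate is the bookkeeping of the last two steps rather than any new geometric idea: one must check that the two non-thin triangles are precisely the origin-containing selections, that at every vertex of a non-thin triangle the opposite two vertices are genuinely separated by $\ell$ (so that Lemma~\ref{lem:ear} applies with a strict conclusion), and that the six resulting ears cover all six thin triangles exactly once. Each of these is a short verification from the fixed cyclic and angular order, but getting the correspondence precisely right is where care is needed; the quantitative content is entirely carried by Lemma~\ref{lem:ear}.
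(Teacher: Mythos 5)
Your proposal is correct and follows essentially the same route as the paper: identify the two non-thin triangles $\{p_1,p_2',p_3\}$ and $\{p_1',p_2,p_3'\}$, note they have equal area by central symmetry, and beat the thin triangles via Lemma~\ref{lem:ear}. The only (harmless) difference is bookkeeping: you apply the ear lemma at all six vertices of the two non-thin triangles to hit each thin triangle once, whereas the paper applies it only three times and disposes of the remaining thin triangles using the congruence of $S_q$ and $S_{q'}$.
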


%

\begin{proof}
In this case the only non-thin polygons are the two triangles $\tau$ and $\tau'$ with vertex sets $\{p_1,p_2',p_3\}$ and $\{p_1',p_2,p_3'\}$, respectively. Note that $\tau$ has the same area as $\tau'$. In addition, by Lemma \ref{lem:ear}$,  \tau$ has greater area than $S_{p_2}$ and $\tau^\prime$ has greater area than $S_{p_1}$ and $S_{p_3}$.
\end{proof}

\begin{figure}
\begin{center}
  \includegraphics[width=0.5\textwidth]{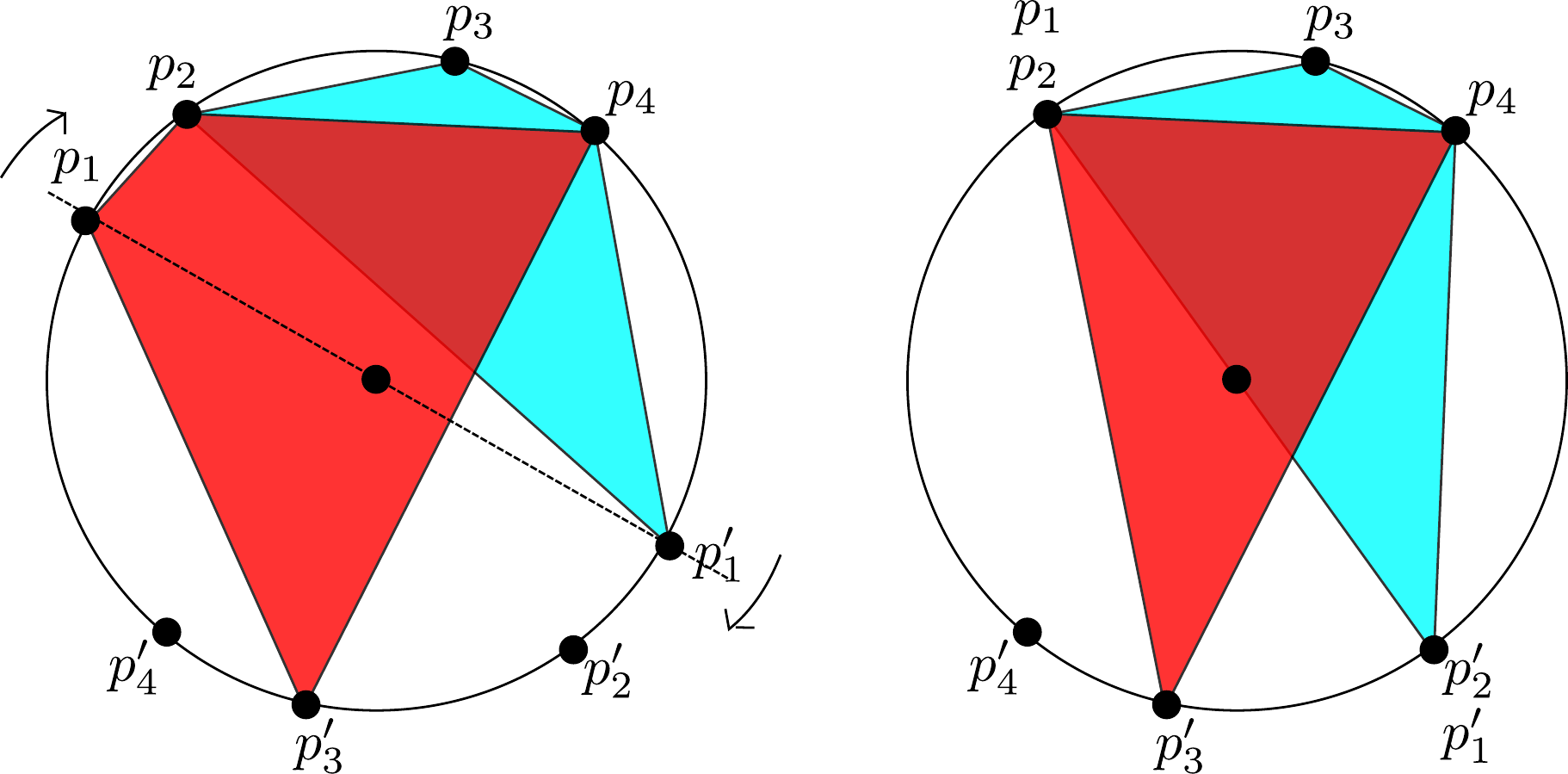}
  \caption{The rotation in the proof of Lemma~\ref{lem:n=4} and its
limit case.}\label{fig:n_4}
\end{center}
\end{figure}

\begin{lemma}\label{lem:n=4}
  For $n=4$, every thin
  antipodal polygon on $S$ has an area strictly less than that of any
  non-thin antipodal polygon on $S$.
\end{lemma}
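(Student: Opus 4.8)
The plan is to compare every antipodal polygon against a single fixed threshold, namely half the area of $\mathrm{conv}(S)$, the centrally symmetric octagon spanned by all $2n=8$ points. I will show that every thin polygon has area strictly below this threshold while every non-thin polygon has area strictly above it, which proves the lemma immediately. The main tool is the standard formula for the area of a polygon inscribed in the unit circle: if its successive vertices subtend central angles (gaps) $\gamma_1,\dots,\gamma_k$ with $\sum_j \gamma_j=2\pi$, then its area equals $\tfrac12\sum_j\sin\gamma_j$, where the term coming from a gap larger than $\pi$ (which occurs exactly when the centre lies outside the polygon) is negative. Writing $\alpha_1<\alpha_2<\alpha_3<\alpha_4$ for the angular positions of $p_1,\dots,p_4$ and setting the basic gaps $a=\alpha_2-\alpha_1$, $b=\alpha_3-\alpha_2$, $c=\alpha_4-\alpha_3$, $d=\pi-(a+b+c)$, one has $a+b+c+d=\pi$; since $\mathrm{conv}(S)$ has gaps $a,b,c,d,a,b,c,d$, its area is $\Sigma:=\sin a+\sin b+\sin c+\sin d$, and the threshold is $\tfrac12\Sigma$.

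For the thin side the argument is purely geometric. A thin polygon $T=S_q$ lies in a closed half-plane $H$ bounded by a line through the origin, so $T\subseteq \mathrm{conv}(S)\cap H$. Because $\mathrm{conv}(S)$ is centrally symmetric and $\partial H$ passes through its centre, $\mathrm{conv}(S)\cap H$ has area exactly $\tfrac12\Sigma$. Moreover $T$ is a \emph{proper} subset of $\mathrm{conv}(S)\cap H$: the four antipodal points omitted by $T$ lie strictly in the complementary open half-plane, so $\mathrm{conv}(S)\cap H$ also contains the extra region cut off along $\partial H$. Hence $\mathrm{area}(T)<\tfrac12\Sigma$. (Equivalently, the gap formula gives $\mathrm{area}(S_q)=\tfrac12\Sigma-\sin x$ for one of $x\in\{a,b,c,d\}$, which is $<\tfrac12\Sigma$ since each gap lies in $(0,\pi)$.)

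For the non-thin side I will argue by direct computation. Each non-thin polygon selects one point per antipodal pair so that the centre lies inside it; up to the central symmetry $N\mapsto -N$ (which preserves area) and the reflection symmetry of the configuration, these arise from the all-$p_i$ selection by flipping one interior vertex or two vertices, and the remaining selections are exactly the thin polygons $S_q$. For each such type I read off its four gaps in terms of $a,b,c,d$, apply the area formula, and use $\sin(\pi-x)=\sin x$ to write its area as $\tfrac12\Sigma+R$. A short application of sum-to-product identities shows that in every case the surplus $R$ reduces to a manifestly positive expression of the shape $2\sin\tfrac{x}{2}\sin\tfrac{y}{2}\cos\tfrac{u-v}{2}$ with $\{x,y,u,v\}=\{a,b,c,d\}$ and $|u-v|<\pi$; hence $\mathrm{area}(N)>\tfrac12\Sigma$ for every non-thin $N$.

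I expect the only delicate part to be this last reduction, so the main obstacle is bookkeeping: correctly enumerating the eight non-thin selections (and checking the other eight are precisely the $S_q$) and simplifying each surplus to positive form without sign errors. The limit case depicted in Figure~\ref{fig:n_4} is exactly the degeneration in which one basic gap tends to $0$: there a non-thin polygon collapses onto a thin one and both areas tend to $\tfrac12\Sigma$, which confirms that the threshold bound is tight and motivates the rotation shown in the figure. Combining the two halves, every thin polygon has area $<\tfrac12\Sigma<$ the area of every non-thin polygon, proving the lemma.
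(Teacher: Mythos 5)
Your approach is correct, but it is genuinely different from the paper's. The paper argues synthetically: it reduces three of the four comparisons to Lemma~\ref{lem:ear} and the $n=3$ case (Lemma~\ref{lem:n=3}), and handles the remaining comparison (the non-thin quadrilateral $p_1p_2p_4p_3'$ versus $S_{p_2}$) by a continuous rotation argument in which the area of $S_{p_2}$ strictly increases, the area of $P$ eventually strictly decreases, and the two areas coincide in the limit configuration of Figure~\ref{fig:n_4}. You instead separate the two families by the single threshold $\tfrac12\operatorname{Area}(\mathrm{conv}(S))$ using the inscribed-polygon formula $\tfrac12\sum_j\sin\gamma_j$. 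I checked the bookkeeping you deferred, and it does close: the sixteen selections split into the eight ``consecutive-run'' patterns, which are exactly the thin polygons $S_q$ with area $\tfrac12\Sigma-\sin x$, $x\in\{a,b,c,d\}$, and eight non-thin selections forming four antipodal classes. For instance, $\{p_1,p_2,p_3',p_4\}$ has gaps $a$, $b+c$, $\pi-c$, $c+d$, and sum-to-product gives surplus $2\sin\tfrac{a}{2}\sin\tfrac{c}{2}\cos\tfrac{b-d}{2}>0$, while the alternating selection $\{p_1',p_2,p_3',p_4\}$ gives $2\sin\tfrac{b}{2}\sin\tfrac{d}{2}\cos\tfrac{a-c}{2}>0$; the remaining two classes work out the same way, and each factor is positive because $a,b,c,d\in(0,\pi)$ and $|b-d|,|a-c|<\pi$. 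What your route buys: it is self-contained (no reliance on the $n=3$ case, and no continuity/monotonicity argument, which is the most delicate and least rigorous step of the paper's proof), and it proves a strictly stronger statement --- a uniform threshold with thin $<\tfrac12\Sigma<$ non-thin, where the half-plane argument for the thin side works verbatim for every $n$. What the paper's route buys: it avoids trigonometry entirely and reuses Lemma~\ref{lem:ear}, machinery needed anyway for the induction in Theorem~\ref{thm:thin}, whereas your enumeration of non-thin types is specific to $n=4$. To make your proposal a finished proof you need only write out the four surplus computations explicitly; there is no gap in the method.
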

\begin{proof}
  In this case a non-thin antipodal polygon $P$ has exactly two
  consecutive points; without loss of generality assume that they are
  $p_1$ and $p_2$.  Thus $P$ is the convex quadrilateral
  $p_1,p_2,p_4,p_3'$. We show that $P$ has greater area than
  $S_{p_1},S_{p_2},S_{p_3'}$ and $S_{p_4'}$.

  By Lemma~\ref{lem:ear} the triangle $p_4'p_1p_2$ has less
  area than the triangle $p_3'p_1p_2$. By Lemma~\ref{lem:n=3} the
  triangle $p_3'p_2p_4$ has an area greater than the triangle
  $p_3'p_4'p_2$ and also greater than the triangle $p_4'p_2p_3$. Thus
  $P$ has an area greater than $S_{p_3'}$ and also greater than
  $S_{p_4'}$.  By Lemma~\ref{lem:ear} the triangle $p_1p_2p_3$ has
  less area than the triangle $p_1p_2p_4$. By Lemma~\ref{lem:n=3} the
  triangle $p_3'p_1p_4$ has an area greater than the triangle
  $p_1p_3p_4$.  Thus $P$ has an area greater than $S_{p_1}$.

It remains to show that $P$ has area greater than $S_{p_2}$.
Let $\ell$ be the line passing through $p_1$ and $p_1'$.
Rotate $\ell$ clockwise continuously around the origin,
until $p_1$ meets $p_2$ and $p_1'$ meets $p_2'$. See Figure~\ref{fig:n_4}.
Note that throughout the motion the area of $S_{p_2}$ is
strictly increasing. To see that, notice that the height of the triangle with vertices $p_2,p_4$ and
 $p_1$ is strictly increasing, as
otherwise, at some point $p_1'$ must intersect the perpendicular
bisector of the segment
$p_2p_4$. However, this cannot happen since $p_1'$ reaches $p_2'$  before it
reaches this line.

On the other hand, the area of $P$ might at first
be strictly increasing, then at some point be strictly decreasing.
Moreover, if this is the case, there is a point in time, at which
$P$ has the same area as in the beginning of the motion
(and will strictly decrease afterwards) and the area
of $S_{p_2}$ has increased. Assume then that
the motion is such that the area of $P$ is strictly decreasing
and the area of $S_{p_2}$ is strictly increasing.

We show
that at the end of the motion $P$ and $S_{p_2}$ have
equal area, this implies that at the beginning of the motion
the area of $P$ is greater than the area of $S_{p_2}$.

 At the end of the motion $P$ coincides with the triangle $p_2p_4p_3'$ and $S_{p_2}$ with the quadrilateral $p_2p_3p_4p_2'$. We split the the quadrilateral $p_2p_3p_4p_2'$ into the triangles $p_2p_3p_4$ and $p_2^\prime p_2 p_4$, sharing the side $\overline{p_2 p_4}$. The height of the triangle $p_2p_4p_3'$ with respect to $\overline{p_2 p_4}$ has the same length that the sum of the heights of the triangles $p_2p_3p_4$ and $p_2^\prime p_2 p_4$ with respect to $\overline{p_2 p_4}$ (It is easy to see by using the triangle $p_4'p_3'p_2'$). Hence $\operatorname{Area}(p_2p_4p_3')$ equals $\operatorname{Area}(p_2p_3p_4p_2')$. \end{proof}

We are ready now to prove our first claim.

\begin{theorem}\label{thm:thin}
Every thin antipodal polygon on $S$ has less area than
any non-thin antipodal polygon on $S$.
\end{theorem}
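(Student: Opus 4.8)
The cases $n=3$ and $n=4$ are exactly Lemmas~\ref{lem:n=3} and~\ref{lem:n=4}, so it remains to treat $n\ge 5$, which I would do by induction on $n$, with those two lemmas serving as the base. The inductive hypothesis is that on any antipodal point set with fewer than $n$ pairs, every thin antipodal polygon has strictly smaller area than every non-thin one. Fix an antipodal set $S$ with $n\ge 5$ pairs, a thin polygon $T$, and a non-thin polygon $P$; the goal is $\operatorname{Area}(T)<\operatorname{Area}(P)$. After relabeling we may assume $T=S_{p_1}=\{p_1,\dots,p_n\}$, i.e. $T$ occupies $n$ consecutive points of $S$; this is harmless, since relabeling preserves areas and the thin/non-thin classes, and every thin polygon is some $S_q$.

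The plan is to delete a single antipodal pair $(p_i,p_i')$ from $S$. The resulting set $S^-$ is again an antipodal point set, now with $n-1\ge 4$ pairs, so the inductive hypothesis applies to it. Deleting the removed point from $T$ (resp.\ from $P$) and taking the convex hull of what remains yields antipodal polygons $T^-$ and $P^-$ on $S^-$, and the area lost is in each case the \emph{ear} at the removed vertex, i.e.\ the triangle spanned by that vertex and its two neighbors in the polygon. Thus $\operatorname{Area}(T)=\operatorname{Area}(T^-)+\operatorname{ear}_T(i)$ and $\operatorname{Area}(P)=\operatorname{Area}(P^-)+\operatorname{ear}_P(i)$. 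Because $T$ is a run of consecutive points, $T^-$ is again such a run and hence thin; so provided $P^-$ is still non-thin, the inductive hypothesis gives $\operatorname{Area}(T^-)<\operatorname{Area}(P^-)$. It then suffices to choose the deleted pair so that, in addition, $\operatorname{ear}_T(i)\le\operatorname{ear}_P(i)$; adding the two displayed identities closes the step.

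Everything therefore hinges on selecting the pair $(p_i,p_i')$ with two simultaneous properties: $P^-$ must remain non-thin (equivalently, the vertices of $P$, viewed cyclically among the $2n$ points of $S$, must not collapse to a single arc after the deletion), and the ear inequality $\operatorname{ear}_T(i)\le\operatorname{ear}_P(i)$ must hold. For the latter I would delete a pair at which $P$'s vertex has its two polygon-neighbors in opposite half-planes of the diameter through that vertex; Lemma~\ref{lem:ear} then bounds $\operatorname{ear}_P(i)$ from below by the smallest admissible triangle, which can be compared against the ``one-sided'' ear $\operatorname{ear}_T(i)$ of the consecutive polygon $T$. I expect the main obstacle to be precisely the joint feasibility of these two requirements: a careful case analysis of the run/gap pattern of $P$ among the $2n$ points is needed to guarantee that some antipodal pair can be removed without turning $P^-$ thin, and the degenerate ``minimal'' non-thin polygons (those with only two short runs) will likely fall outside the scope of a clean deletion and have to be settled directly, either by reducing to the $n=4$ case or by repeating the continuous rotation argument used in the proof of Lemma~\ref{lem:n=4}.
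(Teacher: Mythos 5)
Your plan is, in substance, the same as the paper's own proof: induction anchored at Lemmas~\ref{lem:n=3} and~\ref{lem:n=4}, removal of one antipodal pair, and an ear-to-ear comparison powered by Lemma~\ref{lem:ear} (the paper phrases the removal as cutting off an ear of $P$ that does not contain the origin). The problem is that your argument stops exactly at its load-bearing step. You reduce everything to finding a pair $(p_i,p_i')$ such that (i) $P^-$ stays non-thin and (ii) $\operatorname{ear}_T(i)\le\operatorname{ear}_P(i)$, but you never prove that such a pair exists; you only predict that ``a careful case analysis'' will produce one, and you guess that degenerate polygons will need a separate rotation argument. Worse, the selection criterion you propose for (ii) --- that $P$'s vertex have its two polygon-neighbors in opposite half-planes of its diameter --- is no criterion at all: it holds automatically at \emph{every} vertex $w$ of a non-thin $P$ (if the first vertex of $P$ clockwise from $w$ came after $w'$, all of $V(P)$ would lie in one closed half-plane, making $P$ thin). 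What (ii) really requires, and what you leave implicit, is that $T$'s vertex of the deleted pair be \emph{interior} to $T$'s run: only then is $\operatorname{ear}_T(i)$ the minimal triangle of Lemma~\ref{lem:ear} (congruent, by central symmetry, to the minimal triangle at $P$'s vertex of that pair), so that Lemma~\ref{lem:ear} yields $\operatorname{ear}_T(i)\le\operatorname{ear}_P(i)$. If the deleted pair is an endpoint of $T$'s run, then $\operatorname{ear}_T(i)$ is the triangle spanned by the two extreme vertices of the run and can be strictly larger than $\operatorname{ear}_P(i)$, so the step fails there.

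The gap is real, but it closes with a short counting argument, and none of the degenerate cases you fear arise. Since an antipodal polygon is non-thin exactly when it contains the origin, condition (i) fails at a vertex $w$ of $P$ precisely when the ear of $P$ at $w$ contains the origin, i.e.\ when the two circular arcs between $w$ and its $P$-neighbors sum to at least $180^\circ$; call such a $w$ critical. The $n$ arcs determined by $V(P)$ are each less than $180^\circ$ (non-thinness) and sum to $360^\circ$. If three vertices of $P$ were critical, then --- whether they are pairwise non-adjacent, or two adjacent plus one separate, or three consecutive --- adding the corresponding inequalities forces the arcs to sum to more than $360^\circ$, because for $n\ge 5$ at least one further positive arc is always left over; a contradiction. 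Hence at most two vertices of $P$ are critical. Together with the two pairs that are endpoints of $T$'s run, at most four pairs are forbidden, and $n\ge 5$ guarantees a usable pair $(p_i,p_i')$. Then $\operatorname{Area}(T)=\operatorname{Area}(T^-)+\operatorname{ear}_T(i)$ and $\operatorname{Area}(P)=\operatorname{Area}(P^-)+\operatorname{ear}_P(i)$, the induction hypothesis gives the strict inequality $\operatorname{Area}(T^-)<\operatorname{Area}(P^-)$, and Lemma~\ref{lem:ear} gives $\operatorname{ear}_T(i)\le\operatorname{ear}_P(i)$; adding finishes the inductive step. This count is exactly what the paper's (terse) proof is gesturing at when it asserts that $P$ can be triangulated so that the cut vertex avoids the two special positions of the thin polygon and that some ear avoids the origin.
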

\begin{proof}
  We proceed by induction on $n$. By Lemmas \ref{lem:n=3} and
  \ref{lem:n=4}, we assume that $n\ge 5$.  Let $P$ be a non-thin
  antipodal polygon on $S$. Let $T$ be any triangulation of $P$. Let
  $p$ be a vertex of degree two in $T$ and let $p'$ be its antipodal
  point.  Let $\tau$ be the only triangle of $T$ having $p$ as a
  vertex.  Let $q$ and $r$ be the two neighbors of $p$ in $S$.
  Let $\tau' $ be the triangle with vertices
  $p$, $q$ and $r$. By Lemma~\ref{lem:ear} the area of $\tau'$ is
  equal or less than the area of $\tau$.

Now, suppose that $\tau$ does not contain the origin
in its interior, then the polygon $P'$ with vertices
$V(P)\setminus\{ p \}$ is a non-thin antipodal polygon
for $S\setminus\{p,p'\}$. By induction $P'$ has area greater
area than any thin antipodal polygon on $S\setminus\{p,p'\}$.
Some of these thin polygons together with $\tau'$ form antipodal polygons
on $S$. Using this observation and the fact that the area of $S_{p_i}$
is the same as the area of $S_{p_i'}$, we can show that
except for $S_{p}$ and $S_{q}$ all antipodal thin polygons
on $S$ have area strictly less than $P$. However,
for $n\ge 5$, $P$ can be triangulated so that $p$ is not
the middle nor the last vertex (clockwise) of an ear.
As any triangulation has two ears. There is an ear that does
not contain the origin. The previous arguments (for this ear)
show that the area of $P$ is strictly greater than
the area of $S_p$, similarly for $S_q$.
\end{proof}

\section{Thick antipodal polygons}\label{sec:thick}

In this section we present two area increasing operations on antipodal
polygons. Using a sequence of these operations a non-thick
antipodal polygon can be transformed into a thick antipodal
polygon, this sequence proves Theorem~\ref{thm:thick}.

We begin with an antipodal polygon $P$.
Let $q$ be a point in $S$. By \emph{flipping} $q$, we mean
the following operation: if $q$ is a vertex
of $P$, then choose $q'$ instead; if
$q$ is not a vertex of $P$ then choose
$q$ instead of $q'$.
The two operations described
in Lemmas~\ref{lem:3cons} and \ref{lem:2cons} are sequences of such flips.

\begin{lemma}\label{lem:3cons}
If $P$ has three consecutive points
 $q_1,q_2$ and $q_3$ of $S$ as vertices,
then flipping $q_2$, provides a polygon
$P$ of greater area.
\end{lemma}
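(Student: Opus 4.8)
The plan is to track exactly how the area changes when $q_2$ is replaced by its antipode $q_2'$, and to bound that change using Lemma~\ref{lem:ear}. Write $P'$ for the polygon obtained from $P$ by flipping $q_2$, and let $\ell$ be the diameter through $q_2$ and $q_2'$. Since $q_1,q_2,q_3$ are consecutive on the circle, $q_2$ is the only point of $S$ on the short arc between $q_1$ and $q_3$, so in $P'$ the vertices $q_1$ and $q_3$ become adjacent and the edges $\overline{q_1q_2},\overline{q_2q_3}$ are replaced by $\overline{q_1q_3}$; this removes exactly the ear $q_1q_2q_3$. Dually, $q_2'$ is inserted between its two nearest vertices $a$ and $b$ of $P$, which were adjacent in $P$ since no vertex of $P$ lies on the arc of $q_2'$ strictly between them; this replaces the edge $\overline{ab}$ by the path $a,q_2',b$ and adds the triangle $aq_2'b$. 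Every other edge is untouched, so the two local modifications are independent and
\[
\operatorname{Area}(P')-\operatorname{Area}(P)=\operatorname{Area}(aq_2'b)-\operatorname{Area}(q_1q_2q_3).
\]

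First I would use the central symmetry of $S$: the reflection of $q_1q_2q_3$ through the origin is the triangle $q_1'q_2'q_3'$ determined by $q_2'$ and its two neighbors in $S$, hence $\operatorname{Area}(q_1q_2q_3)=\operatorname{Area}(q_1'q_2'q_3')$. Next I would check the configuration needed to apply Lemma~\ref{lem:ear} at $q_2'$: the diameter $\ell$ splits the circle into two open semicircles, one containing $q_1$ and $q_3'$ and the other containing $q_3$ and $q_1'$. Since each of these semicircles already contains a vertex of $P'$ (namely $q_1$ and $q_3$) and $q_2'$ is the only vertex lying on $\ell$, the two nearest vertices $a,b$ of $P'$ on the two sides of $q_2'$ fall in distinct open half-planes of $\ell$. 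Thus $aq_2'b$ is one of the triangles compared in Lemma~\ref{lem:ear} for the point $q_2'$, whose minimum-area member is precisely $q_1'q_2'q_3'$.

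To finish I would invoke the strict part of Lemma~\ref{lem:ear}: because $q_1\in P$ forces $q_1'\notin P$, and likewise $q_3'\notin P$, the triangle $aq_2'b$ differs from the minimal triangle $q_1'q_2'q_3'$, so $\operatorname{Area}(aq_2'b)>\operatorname{Area}(q_1'q_2'q_3')=\operatorname{Area}(q_1q_2q_3)$. Substituting into the displayed identity yields $\operatorname{Area}(P')>\operatorname{Area}(P)$, as claimed.

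I expect the main obstacle to be the bookkeeping of the area identity together with the verification that $a$ and $b$ genuinely lie in opposite open half-planes of $\ell$, which is what makes Lemma~\ref{lem:ear} applicable and pins down $q_1'q_2'q_3'$ as the correct comparison triangle. Once that is in place, the central-symmetry identification of $\operatorname{Area}(q_1q_2q_3)$ with $\operatorname{Area}(q_1'q_2'q_3')$ and the strictness coming from $q_1',q_3'\notin P$ are routine.
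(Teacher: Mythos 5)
Your proposal is correct and matches the paper's own argument: the paper likewise writes the area change as $\operatorname{Area}(\tau_2)-\operatorname{Area}(\tau_1)$, where $\tau_1=q_1q_2q_3$ is the removed ear and $\tau_2$ is the triangle formed by $q_2'$ and its two new neighbors, then uses central symmetry to replace $\tau_1$ by $q_1'q_2'q_3'$ and applies Lemma~\ref{lem:ear}. Your extra verification that the two new neighbors lie in opposite open half-planes of $\ell$ is a detail the paper leaves implicit, but the route is the same.
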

\begin{proof}
Let $q_4'$ be the point after $q_3'$ in $P$ and $q_0'$ be the point
before $q_1'$ in $P$. Let $\tau_1$ be the triangle with vertex set
$\{q_1,q_2,q_3\}$ and $\tau_2$ the triangle with vertex set
$\{q_0',q_2',q_4'\}$.
The difference of the areas of $P$ and $P'$ is equal to the difference
in the areas of $\tau_1$ and $\tau_2$. However, $\tau_1$ has the same
area as the triangle with vertex set $\{q_1',q_2',q_3'\}$;
by Lemma~\ref{lem:ear} the area of this triangle is
less than that of~$\tau_2$.
\end{proof}

From now on, we assume that $P$ does not contain three consecutive points of
$S$ as vertices. Otherwise we apply the operation described
in Lemma~\ref{lem:3cons}.

\begin{figure}
\begin{center}
  \includegraphics[width=0.75\textwidth]{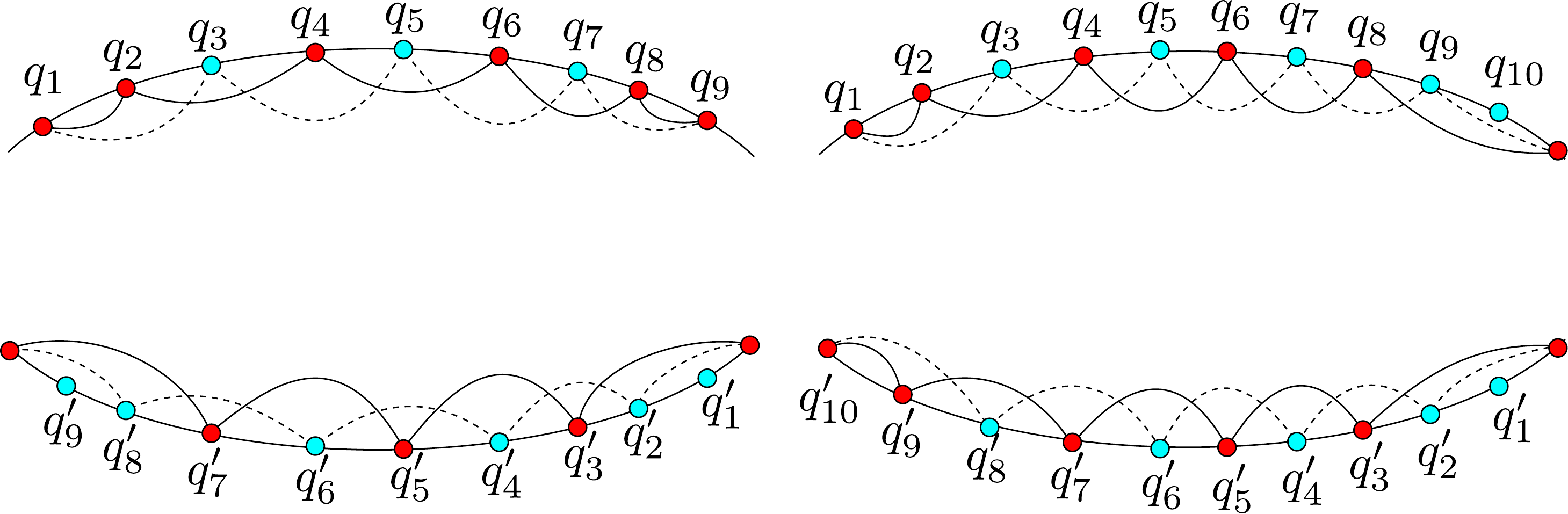}
  \caption{Schematic diagram of the two flip operations
    described in Lemma~\ref{lem:2cons}.
    $P$ is drawn solid and $P'$ is dashed.}\label{fig:flips}
\end{center}
\end{figure}

\begin{lemma}\label{lem:2cons}
 Let $q_1,q_2,\dots,q_m$ $(4\le m <n)$ be consecutive
points of $S$. Suppose that:
\begin{itemize}
\item  $P$ contains $q_1$ and $q_2$.
\item $P$ contains either both $q_{m-1}$ and $q_m$,
or neither of them.
\item The points from $q_2$ to $q_{m-1}$ alternatingly belong to $P$ or not.
\end{itemize}
Let $P'$ be the antipodal polygon obtained from $P$,
by flipping each point $q_i$ $(2\le i \le {m-1})$.
Then $P'$ has greater area than $P$.
\end{lemma}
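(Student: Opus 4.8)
The plan is to compute the area difference $\operatorname{Area}(P')-\operatorname{Area}(P)$ exactly and show it is positive by exploiting the central symmetry of $S$ together with Lemma~\ref{lem:ear}. First I would fix coordinates by placing $p_i$ at angle $\phi_i$ and $p_i'$ at $\phi_i+\pi$ on the unit circle. Since any antipodal polygon is the convex hull of its chosen points, and these lie on the circle, listing the chosen angles in cyclic order $\beta_1<\dots<\beta_n$ gives the shoelace identity \[ \operatorname{Area} = \tfrac12\sum_{j}\sin(\beta_{j+1}-\beta_j), \] valid whether or not the polygon contains the origin. This localizes the problem: flipping $q_2,\dots,q_{m-1}$ changes the chosen angles only inside two short arcs (the ``top'' arc carrying some of $q_2,\dots,q_{m-1}$ and the antipodal ``bottom'' arc carrying the rest), so only the finitely many gaps meeting these arcs contribute to $\operatorname{Area}(P')-\operatorname{Area}(P)$.

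Second, I would separate the two cases according to the parity of $m$ --- equivalently the two operations depicted in Figure~\ref{fig:flips} --- since the hypothesis that $P$ contains either both $q_{m-1},q_m$ or neither forces $q_{m-1}\in P$ exactly when $m$ is odd. In either case the flip reverses the alternating phase on the block, so that $P'$ chooses on the top arc precisely the antipodes of the points $P$ chooses on the bottom arc, and vice versa, apart from the endpoint pairs $1,2$ and $m-1,m$.

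The engine of the argument is then the area-preserving antipodal map $\sigma$ (central symmetry), under which $\operatorname{Area}(P)=\operatorname{Area}(\sigma(P))$ and the triangle on $q_{i-1},q_i,q_{i+1}$ is congruent to the one on their antipodes. Using $\sigma$ to identify the changed gaps of $P'$ on the top arc with the corresponding gaps of $P$ on the bottom arc, I expect the bulk of the $\sin$-terms to cancel in congruent pairs, leaving only a boundary remainder concentrated at the two ends of the block. It is exactly here that the ``both or neither'' hypothesis is used: it makes the end configurations match up so that the interior cancellation is clean and the remainder reduces to comparing, at each end, the ear triangle spanned by a block vertex and its two \emph{neighbors in} $S$ against the wider triangle that replaces it in $P'$. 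Lemma~\ref{lem:ear} (the neighbor triangle is strictly smallest) then makes each remaining comparison strict in the direction of $P'$, yielding $\operatorname{Area}(P')>\operatorname{Area}(P)$; this is precisely the mechanism already used for a single flip in Lemma~\ref{lem:3cons}.

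The main obstacle I anticipate is the bookkeeping of the cyclic reordering: because the flipped vertices jump between the two opposite arcs, I must track carefully which gaps open and close and verify that the telescoping under $\sigma$ really collapses to the two endpoint terms, rather than leaving residual interior terms of uncontrolled sign. Convexity is automatic for points on a circle, so the entire difficulty lies in organizing this finite $\sin$-sum so that Lemma~\ref{lem:ear} applies termwise at the boundary.
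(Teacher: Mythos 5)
Your proposal is correct, and its engine is the same as the paper's: cancel the bulk of the change using central symmetry, and decide the sign at the two ends of the block with Lemma~\ref{lem:ear}. What differs is the decomposition. The paper argues geometrically: it notes that the symmetric difference $(P\setminus P')\cup(P'\setminus P)$ is a union of interior-disjoint triangles $\tau(p)$, one per flipped vertex, that $\tau(q_i)$ and $\tau(q_i')$ are congruent for the interior indices $4\le i\le m-3$ (one lying in $P$, the other in $P'$, hence cancelling), and that each of the remaining boundary pairs $\tau(q_2),\tau(q_2')$, $\tau(q_3),\tau(q_3')$, $\tau(q_{m-2}),\tau(q_{m-2}')$, $\tau(q_{m-1}),\tau(q_{m-1}')$ compares in $P'$'s favor. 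You instead work with the inscribed-polygon identity $\operatorname{Area}=\tfrac12\sum_j\sin(\beta_{j+1}-\beta_j)$, and the bookkeeping you flag as the main obstacle does collapse exactly as you hope: since $P$ picks $q_1,q_2,q_4,\dots$ on the top arc and $q_3',q_5',\dots$ on the bottom arc while $P'$ picks the complementary pattern, every interior gap of $P$ on one arc is the antipode of an interior gap of $P'$ on the other arc, and the gaps $(A,q_1)$ and $(q_m,B)$ to the unflipped neighbors are untouched; what survives is one four-term expression per end, e.g.\ at the $q_2$-end $\bigl[\sin(a+x)-\sin a\bigr]-\bigl[\sin(c+x)-\sin c\bigr]$, where $a$ is the gap $q_1q_2$, $x$ the gap $q_2q_3$, and $c$ the gap from the vertex $C$ of $P$ preceding $q_3'$ to $q_2'$. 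One caution in closing that step: do not try to finish by monotonicity of cosine after factoring (the relevant angles can exceed $\pi$); instead use $\sin(a+x)-\sin a=\sin x-2\operatorname{Area}(q_1'q_2'q_3')$ and $\sin(c+x)-\sin c=\sin x-2\operatorname{Area}(Cq_2'q_3')$, so the end term equals $2\bigl[\operatorname{Area}(Cq_2'q_3')-\operatorname{Area}(q_1'q_2'q_3')\bigr]$, strictly positive by Lemma~\ref{lem:ear} because $C$ and $q_3'$ lie on opposite sides of the line through $q_2,q_2'$; the other end is symmetric in both parity cases. Comparing the two routes: yours makes the interior cancellation pure combinatorics and needs no justification that the symmetric difference decomposes into interior-disjoint triangles (a fact the paper asserts without proof), while the paper's route keeps the end comparisons directly triangle-against-triangle, with no trigonometric identity required.
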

\begin{proof}
  Note that $\mathcal{T}:=(P \setminus P')\cup(P'\setminus P)$ is a set of interior 
  disjoint triangles. For each $p$ in $\{q_2,q_2',\dots,q_{m-1},q_{m-1}'\}$ let 
  $\tau(p)$ be the triangle in $\mathcal{T}$ that contains p as a vertex.
  The difference in the area of $P$ and the area of $P'$ equals the difference in the
  areas of those triangles contained in $P$ and those contained in $P'$.
  For $4\le i \le m-3$,
  the area of $\tau(q_i)$ equals the area of
  $\tau(q_i')$ and one of them is contained in $P$ while the other is
  contained in $P'$.  Thus the difference in the areas of $P$ and $P'$
  depends only on the areas of $\tau(q_2)$, $\tau(q_2')$,  $\tau(q_3)$, $\tau(q_3')$, $\tau(q_{m-2})$,
  $\tau(q_{m-2}')$, $\tau(q_{m-1})$,
  and $\tau(q_{m-1}')$
 Note that the area of $\tau(q_2)$ is smaller  than the
  area of $\tau(q_2')$ and that $P$ contains $\tau(q_2)$ while $P'$ contains
  $\tau(q_2')$. Similarly for $\tau(q_3)$ and $\tau(q_3')$). See Figure~\ref{fig:flips}.

If $P$ contains both $q_{m-1}$ and $q_{m}$, then
$\tau(q_{m-1})$ is contained in $P$ and
$\tau(q_{m-1}')$ is contained in $P'$.
In this case the area
of $\tau(q_{m-1})$ is smaller than the area
of $\tau(q_{m-1}')$.

If $P$ does not contain $q_{m-1}$ and $q_{m}$, then
$\tau(q_{m-1}')$ is contained in $P$ and
$\tau(q_{m-1})$ is contained in $P'$.
In this case the area
of $\tau(q_{m-1}')$ is smaller than the area
of $\tau(q_{m-1})$. The same argument can by apply to $\tau(q_{m-2})$ and $\tau(q_{m-2}')$).
Thus, in all cases the area of $P$ is smaller
than the area of $P'$.
\end{proof}

Note that in the operation described in
Lemma~\ref{lem:2cons} the number of pairs of consecutive
points that are either both on $P$ or not in $P$ decreases.
Moreover, no three consecutive points
all in $P$ or all not in $P$ are created at the same time .

We are now ready to prove the second claim.

\begin{theorem}\label{thm:thick}
For every non-thick antipodal polygon on $S$, there exists
a thick antipodal polygon on $S$ of greater area.
\end{theorem}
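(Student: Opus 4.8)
The plan is to start from the given non-thick antipodal polygon $P$ and repeatedly apply the two area-increasing operations of Lemmas~\ref{lem:3cons} and~\ref{lem:2cons}, each of which strictly increases the area, until no operation can be applied; I then argue that the resulting polygon must be thick. Since every step strictly increases the area, the final thick polygon has area greater than $P$, which is exactly the assertion of the theorem.

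To make the iteration precise I would encode an antipodal polygon as a circular binary string $c_1 c_2 \cdots c_{2n}$, where $c_i = 1$ if the $i$-th point of $S$ (in clockwise order) is a vertex of $P$ and $c_i = 0$ otherwise. The antipodal condition forces $c_{i+n} = 1 - c_i$ for all $i$. Call a pair of cyclically consecutive positions a \emph{defect} if it carries two equal symbols. By the remark following Lemma~\ref{lem:2cons}, that operation strictly decreases the number of defects while never creating a run of three equal symbols, and Lemma~\ref{lem:3cons} removes any run of length three (which, by the relation $c_{i+n}=1-c_i$, simultaneously handles three consecutive points in $P$ and three not in $P$). Since the number of defects is a nonnegative integer that strictly decreases at every step, the process terminates.

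The heart of the argument is to identify the terminal configuration and show it is thick. First I would show that, once all runs have length at most two, a configuration satisfying the hypotheses of Lemma~\ref{lem:2cons} (two cyclically consecutive defects bracketing an alternating segment, with the first defect of ``both in $P$'' type, which always exists because defects occur in antipodal pairs) can be found and the operation applied whenever there are more than the minimum possible number of defects. A short parity computation using $c_{i+n}=1-c_i$ shows that this minimum is $0$ when $n$ is odd (a perfectly alternating string) and $2$ when $n$ is even (a single antipodal pair of defects, which cannot be removed because the two defects lie exactly $n$ apart and so violate the bound $m<n$).

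It then remains to translate ``minimum number of defects'' into thickness. For a line through the origin I would track $W$, the number of vertices of $P$ in one of its open half-planes; as the line rotates, $W$ changes by exactly $\pm 1$ each time it sweeps past a point, and $W$ together with the count for the opposite half-plane sums to $n$, so thickness is equivalent to $W$ remaining in the interval $[\lceil\frac{n-2}{2}\rceil,\lfloor\frac{n+2}{2}\rfloor]$. The defect structure controls the amplitude of the oscillation of $W$: at the minimum defect count $W$ oscillates with the smallest possible amplitude, taking only the values $\frac{n-1}{2}$ and $\frac{n+1}{2}$ when $n$ is odd, and the values $\frac n2-1,\frac n2,\frac n2+1$ when $n$ is even, both of which lie inside the required interval. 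I expect the main obstacle to be precisely this last translation, and in particular the even case: one must verify that the unavoidable pair of defects still yields a thick polygon (so that the iteration never halts at a non-thick polygon), and conversely that any non-thick polygon indeed admits a further area-increasing operation. The special status of $n$ even here is exactly what underlies the paper's observation that the thick analogue of Claim~\ref{claim:thin} fails for even $n$.
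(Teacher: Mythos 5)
Your proposal is correct and follows essentially the same route as the paper: repeatedly apply the operations of Lemmas~\ref{lem:3cons} and~\ref{lem:2cons}, use the strictly decreasing count of consecutive equal pairs (the paper's remark after Lemma~\ref{lem:2cons}) for termination, and identify the terminal configuration as the alternating one for $n$ odd, respectively alternating with a single antipodal pair of defects for $n$ even, which is precisely the characterization of thick polygons the paper invokes. Your parity argument and rotating-line count merely supply details that the paper's proof asserts without elaboration.
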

\begin{proof}


For $n$ odd, an antipodal polygon $Q$ is thick if and only if
its points alternate
between being in $Q$ and not being in $Q$. For $n$ even, an antipodal polygon
is thick if and only if its points alternate between being in $Q$ and
not in $Q$, with
the exception of exactly one pair of consecutive points which are both
in $Q$ (and
its antipodal points not in $Q$).

Assume that all possible operations of
Lemmas~\ref{lem:3cons} and~\ref{lem:2cons} have been applied to a
non-thick antipodal polygon $P$, then $P$ contains at most
one pair of consecutive points in $S$ as vertices
and $P$ is a thick polygon.
\end{proof}

\begin{corollary}
For $n$ odd, every thick antipodal polygon on $S$
has greater area than a non-thick antipodal polygon on $S$.
\end{corollary}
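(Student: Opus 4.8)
The plan is to derive the corollary from the existential statement of Theorem~\ref{thm:thick} together with one additional fact special to odd $n$: \emph{all} thick antipodal polygons on $S$ have the same area. Granting this, the corollary is immediate. Given an arbitrary non-thick antipodal polygon $P$, Theorem~\ref{thm:thick} supplies some thick antipodal polygon $Q$ with $\operatorname{Area}(Q) > \operatorname{Area}(P)$; since every thick polygon has the common area $\operatorname{Area}(Q)$, every thick antipodal polygon on $S$ has area strictly larger than $P$. The whole weight of the argument therefore falls on the equal-area fact, which I regard as the main point to establish.

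To prove that all thick polygons have equal area, I would start from the characterization recorded in the proof of Theorem~\ref{thm:thick}: for $n$ odd, an antipodal polygon is thick precisely when its chosen and non-chosen points alternate along the circular order $p_1,p_2,\dots,p_n,p_1',p_2',\dots,p_n'$ of $S$. Because $S$ consists of $2n$ points, there are only two ways to alternate around the cycle, namely selecting the points in the odd positions or the points in the even positions. The key parity observation is that $p_i$ and $p_i'$ occupy the diametrically opposite positions $i$ and $i+n$; since $n$ is odd these two positions have opposite parity, so each of the two alternating patterns picks exactly one point from every antipodal pair and hence is a genuine antipodal polygon. Consequently there are exactly two thick antipodal polygons.

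Finally I would observe that these two polygons are complementary: a point is chosen by one exactly when it is not chosen by the other. Flipping every vertex of an antipodal polygon $Q$ to its antipode produces the polygon $Q'$, which is the image of $Q$ under the reflection $x \mapsto -x$ through the origin and therefore satisfies $\operatorname{Area}(Q') = \operatorname{Area}(Q)$. The two alternating selections are exactly $Q$ and $Q'$, so they have equal area, which proves the equal-area fact and completes the corollary. The only delicate step is the parity argument that turns the two ``checkerboard'' selections into valid antipodal polygons; this is precisely where oddness of $n$ enters, and it also explains why the statement cannot be strengthened to ``equal area'' for $n$ even, where a thick polygon carries one extra pair of consecutive vertices and distinct thick polygons need not be congruent.
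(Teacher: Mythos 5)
Your proposal is correct and follows essentially the same route as the paper: the paper's (very terse) proof says exactly that for $n$ odd there are only two thick antipodal polygons and that they have the same area, which together with Theorem~\ref{thm:thick} gives the corollary. Your write-up simply supplies the details the paper leaves implicit --- the parity argument showing the two alternating selections are valid antipodal polygons, and the reflection $x \mapsto -x$ showing they have equal area --- both of which are the intended justifications.
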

\begin{proof}
In this case there are only two antipodal thick polygons
and they have the same area.
\end{proof}

We now provide an example of a set of points and a non-thick antipodal polygon
that has greater area than a thick antipodal polygon on this set.

\begin{theorem}
For $n \geq 6$ even, there exist point sets with a non-thick antipodal
polygon of greater area than a thick antipodal polygon.
\end{theorem}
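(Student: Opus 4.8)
The plan is to produce an \emph{explicit} counterexample for the smallest even value $n=6$ and then lift it to every even $n\ge 6$. A useful simplification is that, since all $2n$ points lie on a circle, any choice of one point from each antipodal pair is automatically in convex position; hence every such selection is a valid antipodal polygon and \emph{convexity is never an issue}. Consequently the only two things to control are: the combinatorial in/out pattern of a selection, which is what decides whether it is thin, thick, or non-thick; and the area, computable directly by the shoelace formula. So the whole theorem reduces to finding one point set on which a selection with a non-alternating (hence non-thick) pattern beats a selection with the thick pattern.

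For the base case $n=6$ I would fix the six antipodal pairs at angles $\theta_1<\dots<\theta_6$ on the upper semicircle (their antipodes then occupy the lower semicircle), chosen so that a designated thick polygon is ``pinched.'' Recall from the proof of Theorem~\ref{thm:thick} that for $n$ even a thick polygon is exactly an alternating selection with one doubled consecutive pair. I would take $Q$ to be such an alternating-with-one-exception selection and $P$ to be a selection with \emph{two} doubled consecutive pairs placed at nearly antipodal positions, so that $P$ is a ``fat'' hexagon containing the origin while the forced doubled pair of $Q$ sits where the remaining vertices are densely clustered, wasting area. With the angles written down, the inequality $\operatorname{Area}(P)>\operatorname{Area}(Q)$ becomes a finite shoelace computation, and a small perturbation of the $\theta_i$ guarantees it is strict and robust. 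Here $P$ has two exceptions to the alternating pattern, so it fails the thick characterization and is non-thick; since it contains the center it is not thin either (cf. Theorem~\ref{thm:thin}), so it is genuinely of the intermediate type the statement demands.

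To pass from $n=6$ to an arbitrary even $n=6+2k$, I would insert the remaining $k$ antipodal pairs in a tiny arc of angular width $\varepsilon$ next to one fixed vertex of the base example, placing the $k$ new chosen points alternately in/out in circular order. Their antipodes then receive the opposite labels, so the extended selection $Q$ keeps the ``alternating with one exception'' pattern and stays thick, while the extended $P$ retains two separated doubled pairs and stays non-thick. Each polygon's area changes by only $O(\varepsilon)$, so the fixed positive gap $\operatorname{Area}(P)-\operatorname{Area}(Q)$ inherited from the base case survives for $\varepsilon$ small enough. The main obstacle is the base case itself: one must choose the angles so that the \emph{combinatorial} labelling constraints (that $Q$ realizes the thick pattern and $P$ a genuinely non-thick one) are simultaneously compatible with the \emph{metric} inequality on the two areas. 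Once suitable angles are found, both the shoelace verification at $n=6$ and the $\varepsilon$-cluster bookkeeping in the lifting step are routine.
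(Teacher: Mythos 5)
Your proposal correctly identifies the right kind of example (a ``pinched'' configuration in which the thick polygon wastes area on a dense cluster), but as written it contains a genuine gap: the base case, which you yourself call ``the main obstacle,'' is never actually constructed. You say the angles $\theta_1<\dots<\theta_6$ are ``chosen so that'' the thick polygon is pinched, and that ``once suitable angles are found'' the verification is a routine shoelace computation---but finding such angles and verifying the inequality \emph{is} the entire content of the theorem. An existence statement cannot be proved by asserting that suitable witnesses exist; without explicit angles (or an explicit limiting family) and at least an estimate of the two areas, nothing has been proved. A secondary, smaller issue is that the lifting step from $n=6$ to general even $n$ needs care at the junctions: inserting $k$ new pairs ``alternately in/out'' next to a fixed vertex can create or destroy consecutive doubled pairs where the new arc meets the old pattern, so the claim that $Q$ stays thick and $P$ stays non-thick is exactly the kind of bookkeeping that must be checked, not waved off.

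For comparison, the paper realizes your pinching idea concretely and in a way that makes the area computation trivial: place $p_1,p_2$ arbitrarily close to $(1,0)$ and \emph{all} remaining pairs $p_3,\dots,p_n$ arbitrarily close to $(0,1)$. The thick polygon $P$ containing the doubled pair $p_1,p_2$ then alternates among the clustered points, so it degenerates to the triangle $(1,0),(0,1),(0,-1)$ of area $1$, while a non-thick polygon containing $p_1,p_2',p_3,p_4'$ degenerates to the quadrilateral $(\pm 1,0),(0,\pm 1)$ of area $2$. This single degenerate family works uniformly for every even $n\ge 6$, so no induction or insertion argument is needed at all. If you want to complete your proof along your own lines, the cleanest fix is to adopt such an explicit clustered configuration as your ``suitable angles'': then the combinatorial constraints are checkable by inspection and the area gap ($\approx 2$ versus $\approx 1$) is immediate in the limit, with strictness for sufficiently small cluster radius.
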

\begin{proof}
Place $p_1$ and $p_2$ arbitrarily close to $(1,0)$;
 thus $p_1'$ and $p_2'$ are arbitrarily close to
$(-1,0)$. Place $p_3,\dots, p_n$ arbitrarily close to
$(0,1)$; thus $p_3', \dots, p_n'$ are arbitrarily close
to $(0,-1)$. Let $P$ be the thick antipodal polygon
that contains both $p_1$ and $p_2$ as vertices.
Let $Q$ be any non-thick antipodal polygon that contains
$p_1,p_2',p_3$ and $p_4'$ as vertices. Note that $P$ is arbitrarily
close to the triangle with vertices $(0,1)$, $(0,-1)$ and
$(1,0)$; $Q$ is arbitrarily close to the quadrilateral with
vertices $(-1,0)$, $(0,1)$, $(1,0)$, and $(0,-1)$.
Thus the area of $P$ is arbitrarily close to $1$, while
the area of $Q$ is arbitrarily close to $2$.
\end{proof}

\section{The algorithms}

 It is worth mentioning that the general algorithmic version of the problem in which the input is a set of line segments, each connecting two points on the circle, has been proved to be NP-hard \cite{pilz}. Surprisingly, the antipodal version can be easily solved by using above characterizations.

\begin{theorem}
Antipodal polygons with minimum or maximum the area can be computed in linear time.
\end{theorem}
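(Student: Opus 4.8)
The plan is to use the structural results already established: by Theorem~\ref{thm:thin} the minimum-area antipodal polygon is necessarily thin, and by Theorem~\ref{thm:thick} the maximum-area one is necessarily thick. Hence it suffices to compute the minimum over all thin polygons and the maximum over all thick polygons separately. Throughout I assume that the points of $S$ are given in their clockwise circular order $p_1,\dots,p_n,p_1',\dots,p_n'$ around the origin (otherwise one sorting step is needed and the bound degrades to $O(n\log n)$). I will represent the signed area of a polygon with vertices $v_1,\dots,v_k$ in convex position as $\frac12\sum_i (v_i\times v_{i+1})$, where $v_i\times v_{i+1}$ denotes the cross product and indices are cyclic; this writes the area as a sum of one $O(1)$ term per polygon edge, which is exactly what makes incremental updates possible.

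For the minimum, recall that every thin polygon $S_q$ consists of $q$ together with the $n-1$ consecutive points clockwise from $q$; equivalently, the thin polygons are precisely the $2n$ windows of $n$ cyclically consecutive points of $S$, since a window of size $n$ contains exactly one point of each antipodal pair (the two points of a pair being $n$ apart in the cyclic order). I would compute the area of the first window directly in $O(n)$ and then slide the window one step at a time. Passing from the window $\{v_1,\dots,v_n\}$ to $\{v_2,\dots,v_{n+1}\}$ deletes the vertex $v_1$ and appends $v_{n+1}$, changing only the four edges incident to these two vertices, so the signed-area sum is updated in $O(1)$. After $2n$ steps I have all thin areas and return the smallest; the total time is $O(n)$.

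For the maximum I distinguish the parity of $n$. When $n$ is odd there are, by the characterization in the proof of Theorem~\ref{thm:thick}, exactly two thick polygons, which are antipodal copies of one another and hence have equal area; I compute one of them in $O(n)$. When $n$ is even, a thick polygon is alternating except for a single pair of consecutive vertices that both lie in the polygon, and there are exactly $2n$ of them, one for each cyclic position of this ``doubled'' pair. The key observation is that moving the doubled pair by one position toggles the in/out decision of exactly one antipodal pair, i.e. consecutive thick polygons in this cyclic enumeration differ by flipping a single vertex. A single flip replaces one vertex $q$ by its antipode $q'$, which in the polygon removes $q$ from between its two neighbours and inserts $q'$ between its two new neighbours, altering only a constant number of edges, so again the signed-area sum is updated in $O(1)$. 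Maintaining the vertices in a circular doubly-linked list lets me perform each update, and because the doubled pair advances by one position per step the insertion locus is found in $O(1)$. Enumerating all $2n$ thick polygons this way and keeping the largest area takes $O(n)$ total.

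The routine part is the incremental area bookkeeping; the step I expect to require the most care is the combinatorial claim, for $n$ even, that the $2n$ thick polygons can be ordered into a single cyclic sequence in which consecutive members differ by exactly one flip. Verifying this amounts to checking that, in the near-alternating in/out pattern on $\mathbb{Z}_{2n}$ induced by the antipodal constraint, shifting the unique non-alternating edge by one position flips the membership of a single antipodal pair; once this is confirmed, the $O(1)$ area update per flip and the overall $O(n)$ bound follow immediately.
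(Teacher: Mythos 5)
Your overall structure is the same as the paper's: by Theorems~\ref{thm:thin} and~\ref{thm:thick} the minimum is attained by a thin polygon and the maximum by a thick one, and each family is swept with constant-time area updates. The sliding-window argument for the minimum and the odd-$n$ case of the maximum are correct. However, the ``key observation'' you yourself single out as the delicate step in the even case is false: advancing the doubled pair by one position does \emph{not} toggle a single antipodal pair. Take $n=4$ with points in circular order $p_1,p_2,p_3,p_4,p_1',p_2',p_3',p_4'$. The thick polygon whose doubled pair is $(p_1,p_2)$ has vertex set $\{p_1,p_2,p_4,p_3'\}$, while the one whose doubled pair is $(p_2,p_3)$ has vertex set $\{p_2,p_3,p_1',p_4'\}$; these differ in \emph{three} of the four antipodal pairs. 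In general, moving the doubled pair one position clockwise flips $n-1$ of the $n$ pairs, so the update rule you describe (advance the doubled pair, flip one vertex) does not produce the next thick polygon; a literal implementation would either generate non-thick polygons or require $\Theta(n)$ flips per step, ruining the linear bound.

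The gap is repairable, because a single flip applied to a thick polygon does yield a thick polygon --- just not the adjacent one in your enumeration: flipping the appropriate single vertex of the polygon with doubled pair at position $j$ gives exactly the \emph{antipodal image} of the polygon with doubled pair at position $j+1$, i.e.\ the thick polygon whose doubled pair sits at position $j+n+1 \pmod{2n}$. Since a polygon and its antipodal image are congruent and hence have equal area, the single-flip walk still encounters every thick-polygon area; in fact, since $\gcd(n+1,2n)=1$ when $n$ is even, the walk of $2n$ single flips cycles through all $2n$ thick polygons, only in the order $j \mapsto j+n+1$ rather than $j \mapsto j+1$. With this corrected enumeration (or, equivalently, with the remark that congruence lets you replace each polygon by its flipped representative), your $O(1)$-per-step area bookkeeping and the overall $O(n)$ bound go through, and the argument then fills in the details of the paper's much terser ``linear sweep'' claim.
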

\begin{proof}
According to Theorem \ref{thm:thin}, an antipodal polygon with minimum area is a thin antipodal polygon. Thus, since there exist $O(n)$ thin polygons, we can sweep in a linear number of steps around the circle and update in constant time the area of two consecutive thin polygons.
On the other hand, according to Theorem \ref{thm:thick}, if $n$ is odd, there are only two thick antipodal polygons (the alternating polygons). For $n$ even, there exists a linear number of thick polygons (having two consecutive points and the rest in alternating position). In the last case, a linear sweep around the circle can also be used to compute in linear time a thick antipodal polygon that maximizes the area.
\end{proof}

\section{Higher Dimensions: Antipodal Polytopes}

In this section we consider the analogous problem in higher
dimensions.  Assume therefore that all points are now placed on the
unit $d$-dimensional sphere.  Instead of antipodal polygons we thus
have antipodal polytopes. For a thin antipodal polytope all its points
lie on one side of some hyperplane passing through the origin.

In dimension $3$ or greater Theorem~\ref{thm:thin} does not
hold---there are antipodal point sets $S \subset \R^d$ such that there
exists an antipodal thin polytope with greater $d$-dimensional volume
than a non-thin antipodal polytope on $S$.  We start by providing a
three dimensional example and then argue how to generalize it to
higher dimensions.

For some small $\varepsilon > 0$, let $\delta =
\sqrt{1-2\varepsilon^2}$ and consider the set $S_1$ of the five points
$v_1:=(0, 0, 1)$, $v_2:=(\delta, \varepsilon, \varepsilon)$,
$v_3:=(-\delta, \varepsilon, \varepsilon)$, $v_4:=(\varepsilon,
\delta, \varepsilon)$, and $v_5:=(\varepsilon, -\delta, \varepsilon)$.
Let $S$ be the antipodal point set consisting of $S_1$ and all
its antipodal points.  The convex hull of $S_1$ is
a pyramid with a square base (with corners $v_2 ,\dots, v_5$) which
lies in the horizontal plane just $\varepsilon$ above the origin. The
top of the pyramid is at height 1. Thus, this pyramid does not contain
the origin in its interior, and for $\varepsilon \rightarrow 0$ the
volume of the pyramid converges to $2/3$.

To obtain our second polyhedra first flip the vertex $v_1$ to $v_1':=
(0, 0, -1)$. This gives a similar upside-down pyramid, which contains
the origin in its interior. By also flipping $v_2$ to $v_2':=
(-\delta, -\varepsilon, -\varepsilon)$, we essentially halve the base
of the pyramid to be a triangle.  We denote the resulting point set by
$S_2 = \{v_1', v_2', v_3 , v_4 , v_5 \} \subset S$. Note that $v_2'$
and $v_3$ are rather close together. As the triangle $v_3 , v_4 , v_5
$ lies above the origin, the convex hull of $S_2$ still contains the
origin in its interior. Moreover, the volume of the convex hull of
$S_2$ converges to $1/3$ for $\varepsilon \rightarrow 0$, and thus
towards half of the volume of the convex hull of $S_1$.

So together these two polyhedra constitute an example which shows that
Theorem~\ref{thm:thin} can not be generalized to higher dimensions:
$S$ is a set of five antipodal pairs of points on the surface of the
3-dimensional unit sphere such that the convex hull of $S_1$ does not
contain the origin, while the convex hull of $S_2$ does. But in the
limit the volume of the convex hull of $S_1$ becomes twice as large as
the volume of the convex hull of $S_2$.

It is straight forward to observe that this example can be generalized
to any dimension $d \geq 4$. There we have $2d-1$ antipodal pairs of
points, where we set $\delta = \sqrt{1-(d-1)\varepsilon^2}$ and every
point has one coordinate at $\pm \delta$ and the remaining coordinates
at $\pm \varepsilon$, analogous to the 3-dimensional case.  For $d-1$
of the coordinate axes two such pairs are 'aligned' as in the
3-dimensional example, and for the last axis there is only one such
pair. The resulting polytope does not contain the origin.  Flipping
the vertex of the singular pair and one vertex for all but one aligned
pairs results in a polytope which contains
the origin, but has a volume of only $1/2^{d-2}$ of the first polytope.\\


We call a $d$-dimensional antipodal polytope \emph{thick} if the number of
vertices in any half-space defined by a hyperplane through the origin
contains at least $\left \lceil \frac{n-d}{2} \right \rceil$ points of the
polytope. Note that this definition generalizes the two dimensional
case.

It is not clear that for a given antipodal set in $\R^d$ an antipodal
thick polytope should exist. However, for every $n\ge d$, there
exists antipodal sets in $\R^d$ that admit an antipodal
thick polytope. We use the following Lemma.

\begin{lemma} {\bf (Gale's Lemma \cite{gale})}. For every
$d \ge 0$ and every $k \ge 1$, there exists a set $X \subset S^d$
of $2k+d$ points such that every open hemisphere of $S^d$ contains
at least $k$ points of $X$.
\end{lemma}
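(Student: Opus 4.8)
The plan is to prove Gale's Lemma through Gale's classical construction based on the moment curve, exhibiting the required set $X$ explicitly rather than arguing abstractly. I would set $n = 2k+d$, consider the moment curve $\gamma(t) = (1, t, t^2, \dots, t^d) \in \R^{d+1}$, fix any parameters $t_1 < t_2 < \dots < t_n$, and define
\[
x_i := \frac{(-1)^i\,\gamma(t_i)}{\lVert \gamma(t_i)\rVert} \in S^d, \qquad i = 1, \dots, n,
\]
letting $X = \{x_1, \dots, x_n\}$. The alternating sign $(-1)^i$ is the crucial ingredient and the whole reason the construction works.

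Next I would translate the hemisphere condition into a statement about a univariate polynomial. Every open hemisphere of $S^d$ has the form $H_v = \{x \in S^d : \langle x, v\rangle > 0\}$ for some $v \in \R^{d+1}\setminus\{0\}$, and
\[
\langle x_i, v\rangle = \frac{(-1)^i}{\lVert \gamma(t_i)\rVert}\,p(t_i), \qquad \text{where } p(t) := \langle \gamma(t), v\rangle = v_0 + v_1 t + \dots + v_d t^d .
\]
Thus $x_i \in H_v$ precisely when $(-1)^i p(t_i) > 0$, and I must show that at least $k$ indices satisfy this. Since $v \neq 0$, the polynomial $p$ has degree at most $d$ and is not identically zero, so it has at most $d$ real roots; consequently the sign sequence $\operatorname{sign}(p(t_1)), \dots, \operatorname{sign}(p(t_n))$ changes sign at most $d$ times.

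The heart of the argument is a counting step. Assuming first that no $p(t_i)$ vanishes, the indices $1, \dots, n$ split into at most $d+1$ maximal runs on which $p$ keeps a constant sign. On each run the extra factor $(-1)^i$ makes the signs of $(-1)^i p(t_i)$ strictly alternate, so a run of length $\ell$ contributes at least $\lfloor \ell/2 \rfloor$ positive values. Summing over the at most $d+1$ runs yields at least $\tfrac12\big(n - (d+1)\big) = \tfrac12(2k-1) = k - \tfrac12$ positive values, hence at least $k$ since the count is an integer. This is exactly $|X \cap H_v| \ge k$.

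The main obstacle, and the only place demanding genuine care, is the degenerate case in which some $x_i$ lies on the boundary great-sphere $\{\langle \cdot, v\rangle = 0\}$, i.e.\ $p(t_i) = 0$; such points lie in neither open hemisphere and break the clean run/parity bookkeeping. I would resolve this by a perturbation argument: since the moment curve is in general position (any $d+1$ of the vectors $\gamma(t_i)$ form a nonsingular Vandermonde system and are hence linearly independent) and there are at most $d$ vanishing indices, I can choose $w \in \R^{d+1}$ so that $\langle x_i, w\rangle < 0$ for every boundary index $i$. Replacing $v$ by $v + \eps w$ for small $\eps > 0$ then pushes all boundary points strictly to the negative side while leaving the strict signs of all other points unchanged, so $|X \cap H_v|$ equals $|X \cap H_{v+\eps w}|$, and the latter direction is non-degenerate, so the run-counting bound applies to it. This gives $|X \cap H_v| \ge k$ for every $v$, completing the proof.
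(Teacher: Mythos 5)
Your proof is correct: the moment-curve construction with alternating signs, the sign-change/run counting giving at least $k-\tfrac12$ (hence $k$) points in every non-degenerate open hemisphere, and the Vandermonde-based perturbation for directions whose boundary hits some $x_i$ are all sound. Note that the paper does not prove Gale's Lemma at all --- it cites \cite{gale} and the proof in \cite{mat} (page 64) --- and your argument is essentially that standard cited proof (it even yields, via the positive first coordinate of $\gamma(t)$, the antipodal-pair-free property the paper extracts from \cite{mat}), so there is nothing in the paper to contrast it with.
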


From the proof of Gale's Lemma in \cite{mat} (page 64), it
follows that the provided set does not contain an antipodal pair of points.
Recall that $S^{d-1} \subset \R^d$;
let $X$ be the subset of $S^{d-1}$ provided by Gale's Lemma for
$k=\left \lceil \frac{n-d+1}{2} \right \rceil$. If necessary
remove a point from $X$ so that $X$ consists of exactly $n$ points.
Let $X'$  be the set of antipodal points of $X$. Set $S:=X\cup X'$.
Let $P$ be the antipodal polytope on $S$ with $X$ as a vertex set.
It follows from Gale's Lemma that $P$
is thick.

\section{Open problems}

Let us assume that we are given a circular lattice with an antipodal set of $2n$ points (evenly spaced) and we would like to compute an extremal antipodal $k$-polygon with $k<n$ vertices. This problem is significantly different to the considered case $k=n$. Recall that, for $k=n$, the linear algorithms proposed in this paper are strongly based on the simple characterization for the extremal antipodal polygons. Namely, the minimal thin antipodal polygon has consecutive vertices and the thick one has an alternating configuration.  It is not difficult to come up with examples for which that characterization does not hold in the general case $k<n$. On the other hand, finding the extremal antipodal $(n-1)$-polygon, called $(2n,n-1)$-problem for short, can be easily reduced to solve $O(n)$ times the $(2(n-1),n-1)$-problem. To see this, observe that in the $(2n,n-1)$-problem an antipodal pair is not selected and can thus be removed from the input.
This approach gives a simple $O(n^{n-k+1})$ time algorithm for solving the general $(2n,k)$-problem. This leaves as open problem to prove if the $(2n,k)$-problem can be solve in $o(n^k)$ time.

Instead of area, it is also interesting to consider other extremal measures, like perimeter or the sum of inter-point distances.
Finally, for higher dimensions, we leave the existence of thick polytopes for arbitrary antipodal
point sets as an open problem.

\section{Acknowledgments}

The problems studied here were introduced and partially solved during a visit
to University of La Havana, Cuba.

\bibliographystyle{plain}
\bibliography{antipodalbib}

\end{document}